\newcommand{\beq}[1]{\begin{equation} \label{#1}}
	\newcommand{\eeq}{\end{equation}}
\newcommand{\bed}{\begin{displaymath}}
	\newcommand{\eed}{\end{displaymath}}
\newcommand{\bea}{\bed\begin{array}{rl}}
	\newcommand{\eea}{\end{array}\eed}
\newcommand{\barray}{\begin{array}{ll}}
	\newcommand{\earray}{\end{array}}
\def\qedd
\baselineskip\vbox{\hrule width
		1em\nointerlineskip
		\hbox to 1em{\vrule height 1em
			\hfill
			\vrule height 1em}
		\nointerlineskip
		\hrule width 1em}\bigbreak}
\def\qedsym
\spnewtheorem{thm}{Theorem}[section]{\bfseries}{\itshape}
\spnewtheorem{prop}[thm]{Proposition}{\bfseries}{\itshape}
\spnewtheorem{lem}[thm]{Lemma}{\bfseries}{\itshape}
\spnewtheorem{result}[thm]{Result}{\bfseries}{\itshape}
\spnewtheorem{rem}[thm]{Remark}{\bfseries}{\itshape}
\spnewtheorem{cor}[thm]{Corollary}{\bfseries}{\itshape}
\spnewtheorem{defn}[thm]{Definition}{\bfseries}{\itshape}
\spnewtheorem{exm}[thm]{Example}{\bfseries}{\itshape}
\def\R{\mathbb{R}}
\def\P{\mathbb{P}}
\def\E{\mathbb{E}}
\def\dx{\operatorname{d}}
\begin{document}
	
	\title{Non-uniform bounds for non-normal approximation via Stein's method with applications to the Curie--Weiss model and the imitative monomer-dimer model}

	\titlerunning{Non-uniform bounds for non-normal approximations via Stein's method}        
	
	\author{L\^{e} V\v{a}n Th\`{a}nh$^*$\thanks{$^*$Corresponding author} \and Nguyen Ngoc Tu}
	
	\institute{L\^{e} V\v{a}n Th\`{a}nh \at 
		Department of Mathematics, Vinh University, Nghe An, Vietnam. 
		\email{levt@vinhuni.edu.vn}\\
		Nguyen Ngoc Tu \at
		Department of Applied Sciences, HCMC University of Technology and Education, 01 Vo Van Ngan, Thu Duc District, Ho Chi Minh City, Vietnam.
		\email{tunn@hcmute.edu.vn}}

	\date{Received: date / Accepted: date}
	
	\maketitle
	
	\begin{abstract}
		This paper establishes a non-uniform Berry--Esseen bound for non-normal approximation using Stein's method. 
		The main theorem generalizes the result of the authors in [Comptes Rendus Math\'{e}matique, 2024] to the context of non-normal approximation. 
		As an application of the main result, we derive non-uniform Berry--Esseen bounds in non-central limit theorems for the magnetization in the
		Curie--Weiss model and the imitative monomer-dimer model.
These extend some existing results in the literature, including Theorem 2.1 of Chatterjee and Shao [Ann. Appl. Probab., 2011] and Theorem 1 of Chen [J. Math. Physics, 2016].
		
		\keywords{ Stein's method \and Non-uniform Berry--Esseen bound \and Non-normal approximation \and Curie--Weiss model \and Imitative monomer-dimer model.
		}
		
		\subclass{ 60F05.}
	\end{abstract}
	
	\section{Introduction}
	
	Let $(W, W')$ be an exchangeable pair, that is, $(W,W')$ and $(W',W)$ have the same distribution. 
	Stein \cite{stein1986approximate} developed a new method and proved a normal approximation for $W$ under the linear regression condition
	\begin{equation}\label{linear}
		\E(W'|W) = (1-\lambda) W,
	\end{equation}
	for some $0<\lambda<1$.  
	Chatterjee and Shao \cite{chatterjee2011nonnormal}, and Eichelsbacher and L\"{o}we \cite{eichelsbacher2010stein} expanded Stein's method to accommodate a broader class of distributional approximations beyond the normal case. They generalized the linear regression condition \eqref{linear} to a nonlinear form
	\begin{equation}\label{nonlinear}
		\mathbb{E}\left(W^{\prime} \mid W\right)=  W-\lambda \psi(W) + R,
	\end{equation}
	where $0<\lambda<1$, function $\psi(x)$ depends on a continuous distribution under consideration and $R$ is a random variable.
	
	While much of the work on Stein's method has concentrated on uniform error bounds, 
	several achievements have demonstrated its effectiveness in proving non-uniform Berry--Esseen bounds (Chen et al. \cite{chen2021error}, Chen and Shao \cite{chen2001non,chen2004normal})
	and the Cram\'{e}r-type moderate deviation (Chen et al. \cite{chen2013stein}, Mukherjee et al. \cite{mukherjee2024moderate}, Shao et al. \cite{shao2021cramer}, Zhang \cite{zhang2023cramer}). 
	Very recently, Butzek and Eichelsbacher \cite{butzek2024non}, Dung et al. \cite{dung2025non}, Eichelsbacher \cite{eichelsbacher2024stein}, Liu et al. \cite{liu2021non}, and Th\`{a}nh and Tu \cite{thanh2025non} extended Stein's method to obtain non-uniform bounds in
	normal and non-normal approximations.
Compared to the Cram\'{e}r-type moderate deviation, 
non-uniform Berry--Esseen results typically require weaker conditions and provide a bound with a more explicit constant.
	
	The present paper is a follow-up to \cite{thanh2025non},
	where the authors used a non-uniform concentration inequality approach in Stein's method for normal approximation
	for exchangeable pairs. Here, we generalize the main theorem of \cite{thanh2025non} to cover both normal and non-normal approximations.
	The main result is applied to derive non-uniform Berry--Esseen bounds in non-central limit theorems for the magnetization in the
	critical Curie--Weiss model
and the critical imitative monomer-dimer model. These results extend Theorem 2.1 of Chatterjee and Shao \cite{chatterjee2011nonnormal}, 
Theorem 3.8 of Eichelsbacher and L\"{o}we \cite{eichelsbacher2010stein} and Theorem 1 of Chen \cite{chen2016limit} to the non-uniform Berry--Esseen bounds.

	The rest of the paper is organized as follows.
	Section \ref{sec.main} presents a general theorem on non-uniform Berry--Esseen bound for non-normal approximation of exchangeable pairs.
	In Section \ref{sec.proof}, we present five preliminary lemmas and the proof of Theorem \ref{thm.main11}. 
	Non-uniform Berry--Esseen bounds 
	in the non-central limit theorems for the magnetization in the critical Curie--Weiss model and the
	critical imitative monomer-dimer model
	are presented in Section \ref{sec.appl}.

	\section{A non-uniform Berry--Esseen bound for non-normal approximation of exchangeable pairs}\label{sec.main}
	
	Let $k$ be a positive integer, $a_k$ a positive constant and
	$Y_k$ a random variable with probability  density function
	\begin{equation}\label{b1}
		p_k(x)= b_k \exp\left\{-a_k x^{2k}\right\},\text{ } x\in\R,
	\end{equation}
	where $b_k = \left(\int_{\mathbb{R}} \exp\left\{-a_k x^{2k}\right\}\dx x \right)^{-1}$ is the normalizing constant.
	Note that when $k=1$ and $a_k=1/2$, $Y_k$ is a standard normal distribution random variable.
	Let $(W, W')$ be an exchangeable pair satisfying
	\begin{equation}\label{nonlinear2}
		\mathbb{E}\left(W^{\prime} \mid W\right)=  W-\lambda \psi(W) + R,
	\end{equation}
	where $0<\lambda<1$ and $\psi(x)=2ka_kx^{2k-1}$, $x\in \R$.
Eichelsbacher and L\"{o}we \cite{eichelsbacher2010stein}
	established uniform Berry--Esseen bounds for the random variable $W$. From Lemma 2.2 and Theorem 2.4 of Eichelsbacher and L\"{o}we \cite{eichelsbacher2010stein},
	we have the following theorem.
	
	\begin{theorem}[Eichelsbacher and L\"{o}we \cite{eichelsbacher2010stein}]\label{thm.eichelsbacher2010}
		Let $k$ be a positive integer and let $Y_k$ be a random variable distributed according to $p_k$ as given in \eqref{b1}. 
		Let $(W, W')$ be an exchangeable pair of random variables satisfying \eqref{nonlinear2}, and let $\Delta := W - W'$. 
		Then 
		\begin{equation*}\label{eiche15}		
			\begin{split}
				&\sup_{z\in\R}\left| \mathbb{P}(W \le z) - \mathbb{P}(Y_k \le z) \right|\\
				&\le 
				C(1+\E|W|^{2k-1})\left( \sqrt{\E\left( 1- \frac{1}{2\lambda}\E({\Delta}^2|W)\right)^2 } +\dfrac{\sqrt{\E R^2}}{\lambda} 
				+  a +\frac{a^3}{\lambda}+ \dfrac{\E\Delta^2\mathbf{1}(|\Delta|>a)}{\lambda}\right),
			\end{split}	
		\end{equation*}
		where $C$ is a constant depending only on the density $p_k$.
	\end{theorem}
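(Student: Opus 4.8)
The plan is to run Stein's method of exchangeable pairs with the family of indicator test functions $h_z(x)=\mathbf{1}(x\le z)$, $z\in\R$. Since the density in \eqref{b1} satisfies $p_k'(x)=-\psi(x)p_k(x)$, the natural Stein operator is $\mathcal{A}f(x)=f'(x)-\psi(x)f(x)$, and $\E[\mathcal{A}f(Y_k)]=0$ for every absolutely continuous $f$ with $\E|f'(Y_k)|<\infty$. First I would solve the Stein equation $f'(x)-\psi(x)f(x)=\mathbf{1}(x\le z)-F_k(z)$, where $F_k(z):=\P(Y_k\le z)$, whose bounded solution is
\[
f_z(x)=\frac{1}{p_k(x)}\int_{-\infty}^x\big(\mathbf{1}(t\le z)-F_k(z)\big)p_k(t)\,\dx t,
\]
so that $f_z'(x)=\psi(x)f_z(x)+\mathbf{1}(x\le z)-F_k(z)$ for $x\ne z$, with a jump of size $-1$ at $z$. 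The next step records the estimates on $f_z$ depending only on $p_k$ (essentially Lemma~2.2 of \cite{eichelsbacher2010stein}): the uniform bounds $\|f_z\|_\infty\le C$ and $\|f_z'\|_\infty\le C$, together with the polynomial Lipschitz bound $|\psi(x)f_z(x)-\psi(y)f_z(y)|\le C(1+|x|^{2k-1}+|y|^{2k-1})|x-y|$. The latter comes from the tail asymptotics $F_k(x)\sim p_k(x)/\psi(|x|)$ as $|x|\to\infty$, which give $|\psi f_z|\le C$ but only $|(\psi f_z)'|\le C(1+|x|^{2k-1})$; this polynomial growth is exactly what produces the factor $1+\E|W|^{2k-1}$ in the final bound.

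Next I would exploit exchangeability. The kernel $F(w,w')=(w'-w)(f_z(w)+f_z(w'))$ is antisymmetric, so $\E F(W,W')=0$; expanding $f_z(W')=f_z(W)+(f_z(W')-f_z(W))$ and inserting \eqref{nonlinear2} in the form $\E(W'-W\mid W)=-\lambda\psi(W)+R$ gives $\E[\psi(W)f_z(W)]=\tfrac{1}{2\lambda}\E[(W'-W)(f_z(W')-f_z(W))]+\tfrac{1}{\lambda}\E[Rf_z(W)]$. Combining this with the Stein equation evaluated at $W$, writing $f_z(W')-f_z(W)=-\int_0^{\Delta}f_z'(W-s)\,\dx s$ and then $f_z'(W-s)=f_z'(W)+(f_z'(W-s)-f_z'(W))$, one arrives at
\begin{align*}
\P(W\le z)-F_k(z)
&=\E\Big[f_z'(W)\Big(1-\frac{\E(\Delta^2\mid W)}{2\lambda}\Big)\Big]-\frac{1}{\lambda}\E[Rf_z(W)]\\
&\quad{}-\frac{1}{2\lambda}\E\Big[\Delta\int_0^{\Delta}\big(f_z'(W-s)-f_z'(W)\big)\,\dx s\Big].
\end{align*}
By the Cauchy--Schwarz inequality the first expectation is at most $\|f_z'\|_\infty\E\big|1-(2\lambda)^{-1}\E(\Delta^2\mid W)\big|\le C\sqrt{\E(1-(2\lambda)^{-1}\E(\Delta^2\mid W))^2}$, and the second is at most $\lambda^{-1}\|f_z\|_\infty\E|R|\le C\lambda^{-1}\sqrt{\E R^2}$, which account for the first two terms of the claimed bound.

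The crux is the last expectation. I would write $f_z'(W-s)-f_z'(W)=[\psi(W-s)f_z(W-s)-\psi(W)f_z(W)]+[\mathbf{1}(W-s\le z)-\mathbf{1}(W\le z)]$ and split according to $\{|\Delta|>a\}$ and $\{|\Delta|\le a\}$. On $\{|\Delta|>a\}$ the integrand is bounded crudely by $2\|f_z'\|_\infty$, contributing $O\big(\lambda^{-1}\E[\Delta^2\mathbf{1}(|\Delta|>a)]\big)$. On $\{|\Delta|\le a\}$ the ``smooth part'' is handled by the polynomial Lipschitz estimate, giving a contribution of order $\lambda^{-1}(1+\E|W|^{2k-1})\E[|\Delta|^3\mathbf{1}(|\Delta|\le a)]$; using $|\Delta|^3\le a\Delta^2$ together with $\E\Delta^2\le 2\lambda\big(1+\sqrt{\E(1-(2\lambda)^{-1}\E(\Delta^2\mid W))^2}\big)$, this is absorbed into the term $a$ (times the prefactor) and the regression term. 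The ``jump part'' is nonzero only when $W$ lies within distance $|\Delta|\le a$ of $z$, so its contribution is at most $\tfrac{1}{2\lambda}\E[\E(\Delta^2\mid W)\mathbf{1}(|W-z|\le a)]$, which after comparing $\E(\Delta^2\mid W)$ with $2\lambda$ is dominated by $\P(z-a\le W\le z+a)$ plus a multiple of the regression term. The remaining probability I would control by a concentration inequality of the form
\[
\P(z-a\le W\le z+a)\le C\big(1+\E|W|^{2k-1}\big)\Big(a+\frac{a^3}{\lambda}+\frac{\E[\Delta^2\mathbf{1}(|\Delta|>a)]}{\lambda}\Big),
\]
itself established by a Stein-type argument for the exchangeable pair applied to a bounded piecewise-linear test function supported on a neighbourhood of $z$. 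Proving this concentration inequality is the main obstacle; everything else is bookkeeping, and collecting the three groups of error terms, with $1+\E|W|^{2k-1}\ge1$ used to pull out the common prefactor, yields the asserted bound.
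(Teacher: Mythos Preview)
The paper does not give a separate proof of this statement; it is quoted from Eichelsbacher and L\"owe \cite{eichelsbacher2010stein} (their Lemma~2.2 and Theorem~2.4) and invoked only to dispose of the range $|z|<5$ in the proof of Theorem~\ref{thm.main11}. Your outline is precisely the Eichelsbacher--L\"owe argument, and it is also the skeleton the paper runs for the non-uniform Theorem~\ref{thm.main11}: the identity \eqref{s4} splitting $\P(W\le z)-P_k(z)$ into $T_1+T_2+T_3$, Cauchy--Schwarz for $T_1$ and $T_2$, the split of $T_3$ by $\{|\Delta|>a\}$ versus $\{|\Delta|\le a\}$, and on the latter event the further split into the smooth piece $(\psi f_z)'$ and the indicator jump handled by a concentration estimate. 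So the approach is correct and on the same route.

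One step is imprecise and, as written, does not close. For the smooth part on $\{|\Delta|\le a\}$ the bound coming from the polynomial Lipschitz estimate is
\[
\frac{1}{2\lambda}\,\E\bigl[(1+|W|^{2k-1})\,|\Delta|^3\,\mathbf{1}(|\Delta|\le a)\bigr],
\]
not the factored quantity $\lambda^{-1}(1+\E|W|^{2k-1})\,\E[|\Delta|^3\mathbf{1}(|\Delta|\le a)]$ you wrote; the weight $1+|W|^{2k-1}$ and $|\Delta|^3$ cannot be decoupled. Your proposed continuation via $|\Delta|^3\le a\Delta^2$ together with $\E(\Delta^2\mid W)\approx 2\lambda$ then forces a Cauchy--Schwarz that carries $\sqrt{\E|W|^{4k-2}}$ rather than $\E|W|^{2k-1}$, which is not the prefactor in the statement. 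The remedy is simpler than what you propose: on $\{|\Delta|\le a\}$ bound $|\Delta|^3\le a^3$ outright, giving $C(1+\E|W|^{2k-1})\,a^3/\lambda$. This is the $a^3/\lambda$ term of the theorem (and is exactly how $T_{3,2,1}$ is treated in the paper's proof of Theorem~\ref{thm.main11}); the plain $a$ term comes solely from the concentration inequality for the jump part, not from the smooth part.
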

	
	Recently, Eichelsbacher \cite{eichelsbacher2024stein} used results from Liu et al. \cite{liu2021non} and Shao and Zhang \cite{shao2019berry}
	to obtain a non-uniform Berry--Esseen bound for bounded exchangeable pairs.
The following result is a consequence of Theorem 2.5 of Eichelsbacher \cite{eichelsbacher2024stein}.
	\begin{theorem}[Eichelsbacher \cite{eichelsbacher2024stein}]\label{thm.eichelsbacher2024}
		Let $k$ be a positive integer and let $Y_k$ be a random variable distributed according to $p_k$ as given in \eqref{b1}. 
		Let $(W, W')$ be an exchangeable pair of random variables satisfying \eqref{nonlinear2}, and let $\Delta := W - W'$. 
		Assume that $\E|\psi(W)|^{2} < \infty$ and $|\Delta| \le a$ for some $a>0$. Then for any $z \in \mathbb{R}$, we have
		\begin{equation}\label{eiche25}		\left| \mathbb{P}(W \le z) - \mathbb{P}(Y_k \le z) \right| \le 
			\dfrac{C}{\left( 1 + |z|^{2k-1} \right)}  
			\left(\left| \mathbb{E} \left( 1 - \frac{1}{2\lambda}\mathbb{E}(\Delta^2|W) \right) \right| + \dfrac{\mathbb{E}|R|}{\lambda} + 3a\right),
		\end{equation}
		where $C$ is a constant depending only on $p_k$ and $\E|\psi(W)|^2$.
	\end{theorem}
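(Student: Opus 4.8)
The plan is to run Stein's method tailored to the density $p_k$, combining the exchangeable-pair identity with non-uniform estimates on the Stein solution and a concentration argument near the point $z$. Since $p_k'(x)/p_k(x)=-2ka_kx^{2k-1}=-\psi(x)$, the Stein operator associated with $Y_k$ is $(\mathcal Af)(x)=f'(x)-\psi(x)f(x)$. For a fixed $z\in\R$ I would take $f_z$ to be the bounded solution of the Stein equation $(\mathcal Af_z)(x)=\mathbf{1}(x\le z)-\P(Y_k\le z)$, namely
\[
f_z(x)=\frac{1}{p_k(x)}\int_{-\infty}^{x}\bigl(\mathbf{1}(t\le z)-\P(Y_k\le z)\bigr)p_k(t)\,\dx t ,
\]
which is continuous, is $C^1$ away from $z$, where $f_z'$ has a single unit jump. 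The first task is to record, from this explicit formula, the Mills-type tail bound $\int_x^\infty e^{-a_kt^{2k}}\dx t\le e^{-a_kx^{2k}}/\psi(x)$ for $x>0$ (with the mirror statement for $x<0$), and to deduce from it the uniform estimate $\|f_z'\|_\infty\le C$, the \emph{non-uniform} estimate $\|f_z\|_\infty\le C/(1+|z|^{2k-1})$, and the fact that $|f_z'(x)|\le C/(1+|z|^{2k-1})$ whenever $x$ stays a fixed fraction of $|z|$ away from $z$; here $C$ depends only on $p_k$. These are the $p_k$-counterparts of the Gaussian Stein-solution bounds used by Chen and Shao \cite{chen2001non} and Liu et al.\ \cite{liu2021non}.

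Next I would exploit exchangeability. The map $(w,w')\mapsto(w-w')\bigl(f_z(w)+f_z(w')\bigr)$ is antisymmetric, so its expectation vanishes; writing $\Delta=W-W'$, using $f_z(W')-f_z(W)=-\int_0^\Delta f_z'(W-t)\,\dx t$ together with $\E(\Delta\mid W)=\lambda\psi(W)-R$ from \eqref{nonlinear2}, and dividing through by $2\lambda$, the identity $(\mathcal Af_z)(W)=\mathbf{1}(W\le z)-\P(Y_k\le z)$ rearranges to
\[
\P(W\le z)-\P(Y_k\le z)=\E\Bigl[f_z'(W)\Bigl(1-\tfrac{1}{2\lambda}\E(\Delta^2\mid W)\Bigr)\Bigr]-\tfrac{1}{\lambda}\E[Rf_z(W)]-\tfrac{1}{2\lambda}\E\Bigl[\Delta\!\int_0^\Delta\!\bigl(f_z'(W-t)-f_z'(W)\bigr)\dx t\Bigr].
\]
The $R$-term is bounded immediately by $\|f_z\|_\infty\,\E|R|/\lambda\le C\E|R|/\bigl(\lambda(1+|z|^{2k-1})\bigr)$.

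For the remaining two terms I would split each expectation according to whether $W$ is close to the threshold $z$ or not. On the far region the estimates on $f_z$ and $f_z'$ directly supply the factor $(1+|z|^{2k-1})^{-1}$, while the Taylor remainder is controlled using $|\Delta|\le a$; on the near region $\{\,|W-z|\le|z|/2\,\}$, where $f_z'$ is merely $O(1)$ and carries its jump, a concentration estimate $\P(|W-z|\le|z|/2)\le C/(1+|z|^{2k-1})$ — which follows from $\E|\psi(W)|^2<\infty$ (hence $\E|W|^{2k-1}<\infty$) by Markov's inequality — absorbs the contribution. A careful accounting of constants, as in \cite{eichelsbacher2024stein}, then yields exactly the terms $\bigl|\E\bigl(1-\tfrac{1}{2\lambda}\E(\Delta^2\mid W)\bigr)\bigr|$ and $3a$, each carrying the prefactor $C/(1+|z|^{2k-1})$; the case of $|z|$ bounded is easier, since there $1/(1+|z|^{2k-1})\asymp1$ and one needs no concentration step (it can essentially be read off from \thmref{thm.eichelsbacher2010}).

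The main obstacle is twofold, as is typical for non-uniform Stein bounds. First, one must establish the sharp non-uniform estimates on $f_z$ and $f_z'$ for the non-Gaussian density $p_k$, extending the familiar Gaussian bounds by means of the Mills-type tail inequality above. Second, one must deal with the unit jump of $f_z'$ at $z$ inside the Taylor remainder: it is here that the concentration estimate near $z$ and the boundedness $|\Delta|\le a$ must be combined with real care to recover precisely the clean $3a$ term (rather than the cruder contribution a naive Taylor bound would produce) together with the prefactor $C/(1+|z|^{2k-1})$. Alternatively, and much more briefly, \eqref{eiche25} follows by specializing Theorem 2.5 of Eichelsbacher \cite{eichelsbacher2024stein} to the Stein kernel $\psi$ of $p_k$ and to the given exchangeable pair.
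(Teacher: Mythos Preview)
The paper does not supply its own proof of this theorem; it is quoted as a result of Eichelsbacher~\cite{eichelsbacher2024stein}, with the remark that it ``is a consequence of Theorem~2.5'' there, and your closing sentence says exactly this, so on that level your proposal matches the paper. Your longer direct sketch is a plausible outline of the underlying argument, but one step deserves care: from the term $\E\bigl[f_z'(W)\bigl(1-\tfrac{1}{2\lambda}\E(\Delta^2\mid W)\bigr)\bigr]$ a straightforward bound via $\|f_z'\|_\infty$ (or via your far/near split together with Markov) only delivers $\E\bigl|1-\tfrac{1}{2\lambda}\E(\Delta^2\mid W)\bigr|$, not the smaller quantity $\bigl|\E\bigl(1-\tfrac{1}{2\lambda}\E(\Delta^2\mid W)\bigr)\bigr|$ that appears in~\eqref{eiche25}; recovering the latter form is precisely where the Shao--Zhang kernel representation used in~\cite{shao2019berry,liu2021non,eichelsbacher2024stein} enters, rather than the plain Taylor decomposition you wrote down.
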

	
	In Remark 3 of Eichelsbacher \cite{eichelsbacher2024stein}, the author noted that if the random variable $R$ in \eqref{nonlinear2} is absent, 
	then the term $1/(1+|z|^{2k-1})$ in \eqref{eiche25} can be improved to $1/(1+|z|^{p})$ for $p\ge 2k-1$ 
	provided that $\E|W|^{2p}<\infty$.
	However, in many applications, the exchangeable pair $(W,W')$
	only satisfies \eqref{nonlinear2}, making it essential to develop new techniques to address this limitation.
	
	The main objective of the present paper is to generalize the main result in \cite{thanh2025non}
	to cover both normal and non-normal approximations. Our proof is based on a non-uniform concentration inequality, which differs from those used by
	Eichelsbacher \cite{eichelsbacher2024stein} and Liu et al. \cite{liu2021non}.
	This approach allows for strengthening the term
	$1/(1+|z|^{2k-1})$ in \eqref{eiche25} to $1/(1+|z|)^p$ for any $p\ge 2k-1$, even in the presence of the remainder 
	$R$. This advancement broadens the applicability of non-uniform Berry--Esseen bounds and provides sharper error estimates in various models.
	The main result, Theorem \ref{thm.main11}, also extends Theorem \ref{thm.eichelsbacher2024} to the case where $\Delta$ may not be bounded.
	For the normal case ($k=1$ and $a_k=1/2$), Theorem \ref{thm.main11} reduces to the case of non-uniform bound for normal approximation, which is Theorem 1.2 in \cite{thanh2025non}. Moreover, the constant $C(p)$ in Theorem 1.2 in \cite{thanh2025non} is not as explicit as in Theorem \ref{thm.main11}.
	
	\begin{theorem}\label{thm.main11} 
		Let $k$ be a positive integer and let $Y_k$ be a random variable distributed according to $p_k$ as given in \eqref{b1}.
		Let $(W, W')$ be an exchangeable pair of random variables satisfying \eqref{nonlinear2} and let $\Delta=W-W'$. 
		Then for any $a>0$, $p\ge 2k-1$ and $z\in\mathbb{R}$, we have
		\begin{equation*}
			\begin{split}
				&|\P(W\le z) - \P(Y_k\le z)| \\
				&\le\frac{B^p(p^{p/k}+\E |W|^{2p})}{(1+|z|)^{p} } \left(\sqrt{\E\left( 1- \frac{1}{2\lambda}\E({\Delta}^2|W)\right)^2} +\dfrac{\sqrt{\E R^2}}{\lambda} 
				+  a +\frac{ a^3}{\lambda}+ \dfrac{\sqrt{\E\Delta^4\mathbf{1}(|\Delta|>a)}}{\lambda}\right),
			\end{split}
		\end{equation*}
		where $B$ is a constant which depends only on $k$ and $a_k$. 
		
	\end{theorem}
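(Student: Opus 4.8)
The plan is to follow the exchangeable-pair route of \cite{thanh2025non}, with the Stein equation tuned to the density $p_k$. Write $\Phi_k$ for the distribution function of $Y_k$ and $\bar\Phi_k=1-\Phi_k$. Since $p_k'/p_k=-\psi$, for each fixed $z$ the Stein equation $f_z'(w)-\psi(w)f_z(w)=\mathbf{1}(w\le z)-\Phi_k(z)$ has the bounded solution $f_z(w)=p_k(w)^{-1}\int_{-\infty}^w(\mathbf{1}(t\le z)-\Phi_k(z))p_k(t)\dx t$, so that $\P(W\le z)-\Phi_k(z)=\E[f_z'(W)-\psi(W)f_z(W)]$. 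The preliminary lemmas are to supply the quantitative input: (i) moment and tail estimates for $Y_k$, notably $\E|Y_k|^{2p}\le B^p p^{p/k}$ and $\bar\Phi_k(t)\le B^p p^{p/k}(1+t)^{-p}$ for all $t\ge 0$, $p\ge 2k-1$ --- via $\Gamma(p/k+1/(2k))$ and Stirling this is where $p^{p/k}$ enters; (ii) non-uniform bounds $|f_z(w)|\le B(1+\max(|w|,|z|))^{-(2k-1)}$, $|f_z'(w)|\le B$, the super-exponential smallness of $f_z$ and $f_z'$ on $\{|w|\le|z|/2\}$, and a Lipschitz-type estimate for $w\mapsto\psi(w)f_z(w)$; and (iii) a non-uniform concentration inequality bounding $\P(a_1\le W\le a_2)$, for $0\le a_1\le a_2$, by $B^p(p^{p/k}+\E|W|^{2p})(1+a_1)^{-q}$ times $\big((a_2-a_1)+\sqrt{\E(1-\frac{1}{2\lambda}\E(\Delta^2|W))^2}+\frac{\sqrt{\E R^2}}{\lambda}+\cdots\big)$, obtained by feeding a suitable monotone bounded test function into the exchangeable-pair identity; the hypothesis $p\ge 2k-1$ is what lets the degree-$(2k-1)$ factor $\psi$ arising there be absorbed into a power of $(1+a_1)$. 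By symmetry of $p_k$ and the fact that $(-W,-W')$ again satisfies \eqref{nonlinear2} (with $R$ replaced by $-R$) it suffices to take $z\ge 0$; for $0\le z\le 1$ the claim follows from \thmref{thm.eichelsbacher2010} using $(1+|z|)^p\le 2^p$, $p^{p/k}\ge 1$, $\E|W|^{2k-1}\le 1+\E|W|^{2p}$ and $\E\Delta^2\mathbf{1}(|\Delta|>a)\le\sqrt{\E\Delta^4\mathbf{1}(|\Delta|>a)}$.

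For $z>1$ one runs the main identity. Exchangeability together with $\E(\Delta|W)=\lambda\psi(W)-R$ gives, after antisymmetrization, $\E[\psi(W)f_z(W)]=\frac{1}{2\lambda}\E[\Delta(f_z(W)-f_z(W'))]+\frac{1}{\lambda}\E[Rf_z(W)]$. Writing $f_z(W)-f_z(W')=\int_{\R}\hat K(t)f_z'(W-t)\dx t$ with $\int\hat K=\Delta$, $\Delta\hat K\ge 0$, $\int|\hat K|=|\Delta|$, and substituting $f_z'=\psi f_z+(\mathbf{1}(\cdot\le z)-\Phi_k(z))$, one obtains after rearrangement
\begin{equation*}
\P(W\le z)-\Phi_k(z)=\E\Big[\Big(1-\tfrac{1}{2\lambda}\E(\Delta^2|W)\Big)f_z'(W)\Big]-\frac{1}{2\lambda}(R_1+R_2)-\frac{1}{\lambda}\E[Rf_z(W)],
\end{equation*}
where $R_1=\E[\Delta\int\hat K(t)(\mathbf{1}(W-t\le z)-\mathbf{1}(W\le z))\dx t]$ satisfies $|R_1|\le\E[\Delta^2\mathbf{1}(|W-z|<|\Delta|)]$, and $R_2=\E[\Delta\int\hat K(t)(\psi(W-t)f_z(W-t)-\psi(W)f_z(W))\dx t]$ satisfies $|R_2|\le\E[|\Delta|^3\sup_{|s|\le|\Delta|}|(\psi f_z)'(W-s)|]$.

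It then remains to bound the four terms. The first is $\le\sqrt{\E(1-\frac{1}{2\lambda}\E(\Delta^2|W))^2}\cdot\sqrt{\E f_z'(W)^2}$ by Cauchy--Schwarz, and $\sqrt{\E f_z'(W)^2}\le B^p(p^{p/k}+\E|W|^{2p})(1+z)^{-p}$ follows from (ii): on $\{|W|\le z/2\}$ the integrand is super-exponentially small in $z$, absorbed via $e^{-a_kz^{2k}}\le B^p p^{p/k}(1+z)^{-p}$ (valid for all $z\ge 0$ since $p\log(1+z)-a_kz^{2k}$ has maximum of order $(p/k)\log p$), while on $\{|W|>z/2\}$ one uses $|f_z'|\le B$ and $\P(|W|>z/2)\le 2^{2p}\E|W|^{2p}z^{-2p}$. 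The same estimate for $\sqrt{\E f_z(W)^2}$ handles the $R$-term. For $R_1$, split over $\{|\Delta|\le a\}$ and $\{|\Delta|>a\}$: the first part is $\le\E[\E(\Delta^2|W)\mathbf{1}(|W-z|\le a)]\le 2\lambda\P(|W-z|\le a)+2\lambda\sqrt{\E(1-\frac{1}{2\lambda}\E(\Delta^2|W))^2}\sqrt{\P(|W-z|\le a)}$, and feeding in the concentration inequality (iii) with $a_1=z-a$ --- together with $\P(|W-z|\le a)\le\min\{1,2^{2p}\E|W|^{2p}z^{-2p}\}$ when $z\ge 2a$ --- yields a bound of the form $\frac{B^p(p^{p/k}+\E|W|^{2p})}{(1+z)^p}(a+\text{small factor})$; the second part is $\le\E[\Delta^2\mathbf{1}(|\Delta|>a)]\le\sqrt{\E\Delta^4\mathbf{1}(|\Delta|>a)}$. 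For $R_2$, use that $(\psi f_z)'$ is $O(1)$ on $\{|w|\le z/2\}$ and $O(z^{2k-1})$ near $\{w\approx z\}$; after Markov and the constraint $|\Delta|\le a$ (again splitting off $\{|\Delta|>a\}$ as for $R_1$) this produces the $a^3/\lambda$ term carrying the factor $(1+z)^{-p}$ --- here $p\le 2p-(2k-1)$, i.e.\ the hypothesis $p\ge 2k-1$, is exactly what makes the powers of $z$ match. Summing the contributions and enlarging $B$ in terms of $k$ and $a_k$ completes the proof.

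The crux, and the point of departure from \cite{eichelsbacher2024stein,liu2021non}, is item (iii): a concentration inequality for $W$ that is simultaneously non-uniform in $z$ with the full exponent $(1+z)^{-p}$ (rather than only $(1+|z|)^{-(2k-1)}$) and correctly calibrated in $p$, valid for an exchangeable pair satisfying only \eqref{nonlinear2}, with a possibly non-zero remainder $R$ and possibly unbounded $\Delta$. The hard part will be marrying the Stein concentration argument --- which on its own produces only the degree $2k-1$ of $\psi$ as the decay exponent --- with Markov-type moment input on $W$, while tracking the interaction between the polynomial $\psi$ and the exponential tail of $p_k$; the elementary inequality $e^{-a_kz^{2k}}\le B^p p^{p/k}(1+z)^{-p}$ and its variants are what reconcile the two regimes and account for the precise shape $B^p(p^{p/k}+\E|W|^{2p})$ of the prefactor.
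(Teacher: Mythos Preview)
Your outline follows the paper's route almost exactly: Stein equation for $p_k$, reduction to large $z$ via \thmref{thm.eichelsbacher2010}, the exchangeable-pair identity yielding the variance term, the $R$-term, and a Taylor remainder, together with the non-uniform bounds on $f_z$, $f_z'$ and $(\psi f_z)'$ (your items (i)--(ii) are the paper's Lemmas~\ref{lem.properties.of.solution}--\ref{lem.bsg}) and the key inequality $e^{-a_kz^{2k}}\le B^pp^{p/k}(1+z)^{-p}$. Two points, however, do not go through as you have written them.

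First, the $\{|\Delta|>a\}$ contribution to $R_1$. Your bound ``the second part is $\le\E[\Delta^2\mathbf{1}(|\Delta|>a)]\le\sqrt{\E\Delta^4\mathbf{1}(|\Delta|>a)}$'' simply discards the indicator $\mathbf{1}(|W-z|<|\Delta|)$ and therefore carries \emph{no} factor of $(1+z)^{-p}$; after dividing by $2\lambda$ you obtain $\sqrt{\E\Delta^4\mathbf{1}(|\Delta|>a)}/(2\lambda)$ uniformly in $z$, which is not the theorem. The paper avoids this by reversing the order of your two splittings: it first separates $\{|\Delta|>a\}$ from $\{|\Delta|\le a\}$ in the \emph{full} Taylor remainder $f_z(W)-f_z(W')-\Delta f_z'(W)$, and on $\{|\Delta|>a\}$ writes this as $\Delta(f_z'(W+\xi)-f_z'(W))$ and applies Cauchy--Schwarz against $\sqrt{\E f_z'(W+\xi)^2}+\sqrt{\E f_z'(W)^2}$, which by Lemma~\ref{lem.bs} already carries the $(1+z)^{-p}$ decay. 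Only on $\{|\Delta|\le a\}$ does the paper substitute the Stein equation and produce your $R_1,R_2$. Your ordering can be repaired (retain $\mathbf{1}(|W-z|<|\Delta|)$ and split further over $\{|\Delta|\le z/2\}$ to recover the decay via Markov on $W$ and on $\Delta$), but as stated the step fails.

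Second, the concentration inequality (iii). A \emph{bounded} monotone test function fed into the exchangeable-pair identity yields at best $\E[\Delta^2\mathbf{1}(a_1\le W\le a_2)]\le C\lambda(a_2-a_1)\big(\E|\psi(W)|+\E|R|/\lambda\big)$, with no intrinsic $(1+a_1)^{-p}$ factor. The paper's Lemma~\ref{lem.concentration.inequality} instead uses the \emph{unbounded} weighted test function $f(x)=(1+x+2a)^p(x-z+2a)$ on $[z-2a,z+2a]$ (and its obvious extension), so that $f'\ge(1+z)^p$ on the relevant interval while $\psi(W)f(W)$ and $f(W)R$ are controlled by $\E|W|^{2p}$ via Cauchy--Schwarz; this produces directly
\[
\E\big(\Delta^2\mathbf{1}(|\Delta|\le a)\mathbf{1}(z-a\le W\le z+a)\big)\le \frac{B^p(1+\E|W|^{2p})\,a\,(\sqrt{\E R^2}+\lambda)}{(1+z)^p},
\]
which is precisely the quantity arising as $T_{3,2,2}$. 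Note also that the paper bounds this second-moment quantity \emph{directly} rather than passing through $\P(|W-z|\le a)$; your detour via $\E(\Delta^2\mid W)=2\lambda+\text{error}$ is workable but introduces an extra Cauchy--Schwarz that is unnecessary.
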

	
	\section{Proof of Theorem \ref{thm.main11}}\label{sec.proof}
	In this section, we provide a proof of Theorem \ref{thm.main11}. Throughout this section, we use the notations defined in Section \ref{sec.main}. 
	The symbol $B$ denotes a constant which depends only on $k$ and $a_k$ and can be different for each appearance. 
	If $\E|W|^{2p}=\infty$, then the result is trivial. Therefore, it suffices to consider the case $\E|W|^{2p}<\infty$.
	
	To establish Theorem \ref{thm.main11}, we first introduce some preliminary lemmas.
	The first lemma is a simple consequence of Lyapunov's inequality. See
	Th\`{a}nh and Tu \cite{thanh2025non} for a proof.
	\begin{lemma}\label{lem.bound.moment.W} 
		Let $p\ge 1$, $0<s\le 2p$ and let $|c|\le 100$. Then
		\begin{equation}\label{estimate.moment.W01}
			\E|W+c|^s\le B^p(1+\E|W|^{p}).
		\end{equation}
	\end{lemma}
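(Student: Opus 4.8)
\textbf{Proof plan for Lemma \ref{lem.bound.moment.W}.}

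The plan is to reduce everything to Lyapunov's (power-mean) inequality together with the elementary bound $|x+c|^s \le (|x|+|c|)^s$ and the fact that $|c| \le 100$. First I would handle the case $s \le 1$ separately from $s > 1$, or more uniformly, note that for any real $u,v$ and any $s>0$ one has $(|u|+|v|)^s \le 2^s(\max(|u|,|v|))^s \le 2^s(|u|^s + |v|^s)$; applying this with $u = W$, $v = c$ gives
\[
\E|W+c|^s \le 2^s\bigl(\E|W|^s + |c|^s\bigr) \le 2^s\bigl(\E|W|^s + 100^s\bigr).
\]
So it remains to absorb $\E|W|^s$ into $B^p(1+\E|W|^p)$ and to absorb the constants into $B^p$, using the hypothesis $0 < s \le 2p$.

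For the term $\E|W|^s$ with $0 < s \le 2p$: by Lyapunov's inequality (or Jensen applied to the convex/concave power function as appropriate), $\bigl(\E|W|^s\bigr)^{1/s} \le \bigl(\E|W|^{2p}\bigr)^{1/(2p)}$ whenever $s \le 2p$, hence $\E|W|^s \le \bigl(\E|W|^{2p}\bigr)^{s/(2p)}$. Writing $t = s/(2p) \in (0,1]$ and $M = \E|W|^{2p}$, we have $M^t \le 1 + M$ for all $M \ge 0$ (since $M^t \le \max(1,M) \le 1+M$). Therefore $\E|W|^s \le 1 + \E|W|^{2p}$. Strictly I should phrase the conclusion in terms of $\E|W|^p$ rather than $\E|W|^{2p}$; but the same Lyapunov step with exponent $p$ in place of $2p$ gives $\E|W|^s \le \bigl(\E|W|^p\bigr)^{s/p}$ when $s \le p$, and when $p < s \le 2p$ one still has $\E|W|^s \le (\E|W|^p)^{s/p}$ is false in general --- so here I would instead just keep the bound $\E|W|^s \le 1+\E|W|^{p}$ directly from Lyapunov with target exponent $p$: since $s \le 2p$ need not be $\le p$, one uses $\E|W|^s \le (\E|W|^{s})$ trivially and then $1 + \E|W|^{s}$, noting $\E|W|^{s} \le 1 + \E|W|^{p}\cdot(\text{something})$ only when $s\le p$. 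To be safe and match the statement exactly, I would split: if $s \le p$ use Lyapunov to get $\E|W|^s \le (\E|W|^p)^{s/p} \le 1 + \E|W|^p$; if $p < s \le 2p$, I do not expect a clean bound by $\E|W|^p$ alone, so I suspect the statement is to be read with the implicit normalization that $\E|W|^p$ should really be $\E|W|^{2p}$ elsewhere in the paper, or $B^p$ is allowed to depend on $p$ in a way that lets $1 + \E|W|^s \le B^p(1+\E|W|^p)$ fail --- hence the main obstacle is precisely pinning down this exponent bookkeeping.

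\textbf{Main obstacle.} The only real subtlety is the exponent comparison in the regime $p < s \le 2p$; for $s \le p$ the argument is immediate. I expect the resolution is that in all intended applications $s \le p$ (the right-hand side exponent is the larger one), so that Lyapunov gives $\E|W|^s \le 1 + \E|W|^p$, and then collecting constants,
\[
\E|W+c|^s \le 2^s\bigl(1 + \E|W|^p + 100^s\bigr) \le 2^{2p}(1 + 100^{2p})\bigl(1 + \E|W|^p\bigr) =: B^p\bigl(1+\E|W|^p\bigr),
\]
with $B = 2\cdot(1+100^2)^{1/2} \le 200$ (or any convenient explicit value). The whole argument is three lines of inequalities — triangle inequality, Lyapunov, and $M^t \le 1+M$ — and I would present it in that order. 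Since the paper explicitly says "See Th\`{a}nh and Tu \cite{thanh2025non} for a proof," I would keep the in-paper justification to a single sentence recalling these three ingredients rather than writing it out in full.
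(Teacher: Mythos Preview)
Your three-step argument --- triangle-type inequality $(|u|+|v|)^s\le 2^s(|u|^s+|v|^s)$, Lyapunov, and $M^t\le 1+M$ for $t\in(0,1]$ --- is exactly what the paper intends: it gives no proof of its own, only the line ``a simple consequence of Lyapunov's inequality'' with a pointer to \cite{thanh2025non}, and that is precisely what you are doing.

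Your concern about the regime $p<s\le 2p$ is not a gap in your reasoning but a genuine misprint in the stated lemma: taking $c=0$ and $s=2p$ the claim would read $\E|W|^{2p}\le B^p(1+\E|W|^p)$, which is false for general $W$ with $B$ depending only on $k,a_k$. The right-hand side should be $B^p(1+\E|W|^{2p})$. Every invocation of Lemma~\ref{lem.bound.moment.W} in the paper --- in \eqref{RHS} within Lemma~\ref{lem.concentration.inequality} and in \eqref{estimate.lem1.21}, \eqref{estimate.lem1.23} within Lemma~\ref{lem.bsg} --- lands on a bound of the form $B^p(1+\E|W|^{2p})$, so the correction is consistent with all applications. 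With that emendation your argument goes through for the full range $0<s\le 2p$ in one line:
\[
\E|W+c|^s\le 2^s\bigl(\E|W|^s+100^s\bigr)\le 2^{2p}\bigl((\E|W|^{2p})^{s/(2p)}+100^{2p}\bigr)\le 2^{2p}(1+100^{2p})\bigl(1+\E|W|^{2p}\bigr).
\]
So there is nothing to fix in your plan; just state the result with exponent $2p$ on the right.
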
 
	
	The next lemma generalizes the non-uniform concentration inequality in Lemma 2.2 of 
	Th\`{a}nh and Tu \cite{thanh2025non}, and its proof follows a similar approach to that of 
	Lemma 2.2 of Th\`{a}nh and Tu \cite{thanh2025non}.
	\begin{lemma}\label{lem.concentration.inequality} 
		Let $p\ge 2k-1$ and $z \ge 5$. Then for $0<a< 1$, we have
		\begin{equation} \label{T322a}
			\E \left(\Delta^2\mathbf{1}(|\Delta|\le a)\mathbf{1}(z-a \le W \le z+a)\right) \le \dfrac{B^{p}(1+\E|W|^{2p})a(\sqrt{\E R^2} + \lambda)}{(1+z)^p}.
		\end{equation}
	\end{lemma}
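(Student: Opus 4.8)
The plan is to adapt the exchangeable-pair concentration argument behind Lemma 2.2 of Th\`{a}nh and Tu \cite{thanh2025non}, using a test function tailored to produce the weight $(1+z)^{-p}$. Since the inequality is trivial when $\E|W|^{2p}=\infty$, assume $\E|W|^{2p}<\infty$; then $W$, and hence $W'$ (which has the same law), has all moments of order at most $2p$, and we may also assume $\E R^2<\infty$. Fix $z\ge 5$ and $0<a<1$, so that $z-2a\ge 3$, and define the nondecreasing ramp $f=f_{z,a}\colon\R\to\R$ by $f(w)=0$ for $w\le z-2a$, $f(w)=w-(z-2a)$ for $z-2a\le w\le z+2a$, and $f(w)=4a$ for $w\ge z+2a$; thus $0\le f(w)\le 4a\,\mathbf{1}(w\ge z-2a)$. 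On the event $\{z-a\le W\le z+a,\ |\Delta|\le a\}$ both $W$ and $W'=W-\Delta$ lie in $[z-2a,z+2a]$, where $f$ has slope $1$, so $f(W)-f(W')=W-W'=\Delta$ and $\Delta(f(W)-f(W'))=\Delta^2$ there. Since $f$ is nondecreasing, $\Delta(f(W)-f(W'))\ge 0$ everywhere, and therefore
\begin{equation*}
\Delta^2\,\mathbf{1}(|\Delta|\le a)\,\mathbf{1}(z-a\le W\le z+a)\ \le\ \Delta(f(W)-f(W'))\,\mathbf{1}(|\Delta|\le a)\ \le\ \Delta(f(W)-f(W')).
\end{equation*}

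Next I would rewrite $\E[\Delta(f(W)-f(W'))]$ via the exchangeable-pair identity. Conditioning on $W$ and using \eqref{nonlinear2} in the form $\E(\Delta\mid W)=\lambda\psi(W)-R$ gives $\E[\Delta f(W)]=\lambda\,\E[\psi(W)f(W)]-\E[Rf(W)]$, and exchangeability gives $\E[\Delta f(W')]=\E[(W'-W)f(W)]=-\E[\Delta f(W)]$ (boundedness of $f$ together with $\E|W|<\infty$, $\E|W|^{2k-1}<\infty$ and $\E|R|<\infty$ furnish the required integrability). Hence $\E[\Delta(f(W)-f(W'))]=2\,\E[\Delta f(W)]=2\lambda\,\E[\psi(W)f(W)]-2\,\E[Rf(W)]$, so from the previous display
\begin{equation*}
\E\left(\Delta^2\,\mathbf{1}(|\Delta|\le a)\,\mathbf{1}(z-a\le W\le z+a)\right)\ \le\ 2\lambda\left|\E[\psi(W)f(W)]\right|+2\left|\E[Rf(W)]\right|.
\end{equation*}

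It remains to estimate the two terms, and this is where $z\ge 5$, $0<a<1$ and $p\ge 2k-1$ are used. For the remainder term, Cauchy--Schwarz and $0\le f\le 4a\,\mathbf{1}(W\ge z-2a)$ give $|\E[Rf(W)]|\le\sqrt{\E R^2}\sqrt{\E f(W)^2}\le 4a\sqrt{\E R^2}\sqrt{\P(|W|\ge z-2a)}$; Markov's inequality bounds $\P(|W|\ge z-2a)\le\E|W|^{2p}/(z-2a)^{2p}$, and since $z-2a\ge 3$ one has $(1+z)/(z-2a)\le 2$, so this term is at most $B^p\,a\,\sqrt{\E R^2}\,(1+\E|W|^{2p})/(1+z)^p$. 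For the $\psi$ term, $\psi(W)=2ka_kW^{2k-1}$ and the support of $f$ give $|\E[\psi(W)f(W)]|\le 8ka_k\,a\,\E[\,|W|^{2k-1}\mathbf{1}(|W|\ge z-2a)\,]\le 8ka_k\,a\,\E|W|^{2p}/(z-2a)^{2p-(2k-1)}$; the hypothesis $p\ge 2k-1$ is exactly what yields $2p-(2k-1)\ge p$, so with $z-2a\ge 3$ this term is at most $B^p\,\lambda\,a\,\E|W|^{2p}/(1+z)^p$. Adding the two estimates, absorbing the numerical constants and $8ka_k$ into $B=B(k,a_k)$ and using $\sqrt{\E|W|^{2p}}\le 1+\E|W|^{2p}$, gives the asserted bound.

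The only genuinely delicate point is the design of the test function and the attendant exponent bookkeeping: the ramp must be spread over $[z-2a,z+2a]$ rather than $[z-a,z+a]$, so that the whole event $\{z-a\le W\le z+a,\ |\Delta|\le a\}$ sits inside the unit-slope part of $f$ and the left-hand indicator is reproduced exactly; and truncating the $2p$-th moment of $W$ at level $z-2a$ must be arranged to leave a factor $(z-2a)^{-p}$, which is precisely the requirement $p\ge 2k-1$ entering through $2p-(2k-1)\ge p$. The rest --- the exchangeable-pair swap, the two applications of Markov/Cauchy--Schwarz, and the consolidation of constants --- is routine.
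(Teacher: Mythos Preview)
Your proof is correct, but it takes a genuinely different route from the paper. The paper builds the polynomial weight into the test function itself, choosing
\[
f(x)=\begin{cases}
0,&x<z-2a,\\
(1+x+2a)^p(x-z+2a),&z-2a\le x\le z+2a,\\
4a(1+x+2a)^p,&x>z+2a,
\end{cases}
\]
so that $f'(x)\ge(1+z)^p$ on $[z-2a,z+2a]$; the factor $(1+z)^p$ then appears in the \emph{lower} bound for $\E[\Delta(f(W)-f(W'))]$, and the right-hand side $2\lambda\E[\psi(W)f(W)]-2\E[f(W)R]$ is controlled via Cauchy--Schwarz and Lemma~\ref{lem.bound.moment.W}. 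You instead use the unweighted ramp and manufacture the $(1+z)^{-p}$ decay on the \emph{upper} bound, by exploiting the support constraint $f(W)\le 4a\,\mathbf{1}(W\ge z-2a)$ together with Markov's inequality on $\E|W|^{2p}$; in particular, your treatment of the $\psi$ term makes the role of the hypothesis $p\ge 2k-1$ completely transparent through $2p-(2k-1)\ge p$. Your argument is somewhat more elementary and avoids the moment lemma, at the cost of handling the two right-hand terms separately; the paper's weighted test function is a direct lift of the normal-case argument in \cite{thanh2025non} and treats both terms uniformly via a single Cauchy--Schwarz step.
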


	\begin{proof}
		As in Lemma 2.2 of Th\`{a}nh and Tu \cite{thanh2025non}, we set
		\[f(x) = \begin{cases}
			0 &\text{ for } x < z-2a,  \\
			(1+x+2a)^p(x-z+2a) &\text{ for } z-2a \le x \le z+2a,  \\
			4a(1+x+2a)^p &\text{ for }x > z+2a.  \\ 
		\end{cases} \]
		Applying the exchangeability and \eqref{nonlinear2}, we have
		\begin{equation}\label{sp}
			\begin{split}
				\E\left((W-W')(f(W)-f(W'))\right)&= 2\E Wf(W)-2\E W'f(W)\\
				&= 2\E Wf(W)-2\E(f(W)\E(W'|W))\\
				&= 2\E Wf(W)-2\E f(W)(W - \lambda \psi(W) + R)\\
				&=2\lambda \E \psi(W)f(W)-2\E f(W)R.
			\end{split}
		\end{equation}
		We note that $f$ is continuous, increasing, $f' \ge 0$ on $\mathbb{R}$, and 
		\begin{equation}\label{derivative.of.f}
			\begin{split}
				f'(x) \ge (1+x+2a)^p \ge (1+z)^p \text{ for }z-2a \le x \le z+2a.
			\end{split}
		\end{equation}
		Therefore
		\begin{equation}\label{LHS} 
			\begin{split}
				&\E\left((W-W')(f(W)-f(W'))\right)\\
				&= \E\Delta \int_{-\Delta}^0 f'(W+t)\dx t \\
				&\ge \E\Delta \int_{-\Delta}^0 \mathbf{1}(|t|\le a)\mathbf{1}(z-a\le W \le z+a)f'(W+t)\dx t \\
				&\ge \E\Delta \int_{-\Delta}^0 \mathbf{1}(|t|\le a)\mathbf{1}(z-a \le W \le z+a)(1+z)^p\dx t \\
				&\ge (1+z)^p\E\Delta \int_{-\Delta}^0 \mathbf{1}(|\Delta|\le a)\mathbf{1}(z-a \le W \le z+a)\dx t \\
				&= (1+z)^p \E\left(\Delta^2\mathbf{1}(|\Delta|\le a)\mathbf{1}(z-a \le W \le z+a)\right),
			\end{split}
		\end{equation}
		where we have applied \eqref{derivative.of.f} in the second inequality.
		By using 
		definition of $f$, the Cauchy--Schwarz inequality
		and Lemma \ref{lem.bound.moment.W}, we have
		\begin{equation}\label{RHS} \begin{split} 
				&2\lambda \E \psi(W)f(W)-2\E f(W)R\\
				&\le 2\lambda \E(4a |1+W+2a|^p 2ka_k|W|^{2k-1})+2\E(4a|1+W+2a|^p|R|)\\
				&\le 8a \left(2ka_k\lambda\sqrt{\E|W+1+2a|^{2p}\E |W|^{4k-2}}+\sqrt{\E|W+1+2a|^{2p}}\sqrt{\E R^2}\right) \\
				&\le B^p(1+\E|W|^{2p})a(\lambda+\sqrt{\E R^2}).
			\end{split} 
		\end{equation}
		Combining \eqref{sp}, \eqref{LHS} and \eqref{RHS} yields \eqref{T322a}.
		
		The proof of the lemma is completed. 
	\end{proof}

	Let $P_k(z) = \P(Y_k\le z)= \int_{-\infty}^z p_k(x)\dx x$. For $z\in \R$, let $f_z$ be the solution to the Stein equation 
	\begin{equation}\label{s3} 
		f'(x) - \psi(x)f(x) = \mathbf{1}(x \le z) - P_k(z),
	\end{equation}
	and let $g_z(x)=(\psi(x)f_z(x))'$. In Eichelsbacher and L\"{o}we \cite{eichelsbacher2010stein}, the authors 
	derived some important properties of $f_z$ and $g_z$. These properties will be summarized in the following lemma.
	
	\begin{lemma}[Eichelsbacher and L\"{o}we \cite{eichelsbacher2010stein}]\label{lem.properties.of.solution}
		Let $z\in\R$ and let $P_k$, $f_z$ and $g_z$ be defined as above. The following statements hold.
		\begin{itemize}
			\item[(i)] For $x>0$, we have
			\begin{equation}\label{mills.ratio1}
				1-P_k(x) \le \min \left(\frac{1}{2}, \frac{b_k}{2 k a_k x^{2 k-1}}\right) \exp \left(-a_k x^{2 k}\right),
			\end{equation}
and therefore, for $x<0$, we have
			\begin{equation}\label{mills.ratio2}
				P_k(x)=1-P_k(-x) \le \min \left(\frac{1}{2}, \frac{b_k}{2 k a_k |x|^{2 k-1}}\right) \exp \left(-a_k x^{2 k}\right).
			\end{equation}
			\item[(ii)] The explicit formular for $f_z$ is
			\begin{equation}\label{sol03}
				f_z(x) =
				\begin{cases}
					(1-P_k(z))P_k(x)\exp({a_kx^{2k}})b_k^{-1} &\text{if }\ x < z,\\
					P_k(z)(1-P_k(x))\exp({a_kx^{2k}})b_k^{-1} &\text{if }\ x \ge z.
				\end{cases} 
			\end{equation}
			\item[(iii)] The explicit formular for $f_{z}'$ is
			\begin{equation}\label{sol15}
				f_{z}'(x) =
				\begin{cases}
					(1-P_k(z))\left[1 + 2ka_kx^{2k-1}P_k(x)\exp({a_kx^{2k}})b_k^{-1}\right] &\text{if }\ x < z,\\
					P_k(z)\left[(1-P_k(x))2ka_kx^{2k-1}\exp({a_kx^{2k}})b_k^{-1}-1\right] &\text{if }\ x \ge  z. 
				\end{cases} 
			\end{equation}
			\item[(iv)] 
			For all $x\in\R$, we have
			\begin{equation}\label{sol07}
				0 < f_z(x) \le \dfrac{1}{2b_k}, \text{ and }
				|f_{z}'(x)| \le 1.
			\end{equation}
			\item[(v)] For $x< z$, we have
			\begin{equation*}\label{estimate.lem1.24}
				g_z(x)=(1-P_k(z))\left[b_{k}^{-1}P_k(x)\exp(a_kx^{2k}) \left((4k^2-2k)a_kx^{2k-2} + 4k^2a_k^2x^{4k-2}\right) + 2ka_kx^{2k-1}\right].
			\end{equation*}
		\end{itemize}
	\end{lemma}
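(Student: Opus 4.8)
The plan is to take the five items in turn; (ii), (iii) and (v) are essentially calculus once one notices that \eqref{s3} is a first-order linear ODE whose integrating factor is $\exp(-a_kx^{2k})$ (because $\tfrac{\dx}{\dx x}(-a_kx^{2k})=-\psi(x)$), and the real content sits in the bounds (iv). For (i): the estimate $1-P_k(x)\le 1/2$ for $x\ge 0$ follows from the symmetry of $p_k$ (so $P_k(0)=1/2$) together with the monotonicity of $P_k$; for the other term I would write $1-P_k(x)=b_k\int_x^\infty\exp(-a_kt^{2k})\dx t$ for $x>0$, insert the factor $2ka_kt^{2k-1}/(2ka_kt^{2k-1})$, bound $1/(2ka_kt^{2k-1})\le 1/(2ka_kx^{2k-1})$ on $[x,\infty)$, and evaluate $\int_x^\infty 2ka_kt^{2k-1}\exp(-a_kt^{2k})\dx t=\exp(-a_kx^{2k})$; the case $x<0$ then comes from $P_k(x)=1-P_k(-x)$.

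For (ii) and (iii): multiplying \eqref{s3} by $\exp(-a_kx^{2k})$ turns its left-hand side into $\tfrac{\dx}{\dx x}\bigl(f(x)\exp(-a_kx^{2k})\bigr)$; integrating from $-\infty$ to $x$ when $x<z$ and from $x$ to $+\infty$ when $x\ge z$ (both integrands are absolutely integrable and have total integral $0$ by the definition of $P_k(z)$, and the boundary terms vanish for the solution not growing like $\exp(a_kx^{2k})$) produces \eqref{sol03}, and \eqref{sol15} is then obtained by differentiating \eqref{sol03}, using $P_k'=p_k=b_k\exp(-a_kx^{2k})$ and $\psi(x)=2ka_kx^{2k-1}$. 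Item (v) is the product rule applied on $\{x<z\}$ to $\psi(x)f_z(x)=(1-P_k(z))b_k^{-1}\cdot 2ka_kx^{2k-1}P_k(x)\exp(a_kx^{2k})$, differentiating the three factors $x^{2k-1}$, $P_k(x)$ and $\exp(a_kx^{2k})$ and collecting terms.

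The substantive item is (iv). Positivity of $f_z$ is immediate from \eqref{sol03}. For the upper bound on $f_z$ I would use the monotonicity of $P_k$ to get $1-P_k(z)\le 1-P_k(x)$ when $x<z$ and $P_k(z)\le P_k(x)$ when $x\ge z$, so that in both cases $0<f_z(x)\le b_k^{-1}P_k(x)\bigl(1-P_k(x)\bigr)\exp(a_kx^{2k})$; this majorant is even in $x$, and for $x\ge 0$ the bound $\bigl(1-P_k(x)\bigr)\exp(a_kx^{2k})\le 1/2$ supplied by (i) gives $f_z(x)\le 1/(2b_k)$. For $|f_z'(x)|\le 1$ I would introduce $q(x)=P_k(x)\exp(a_kx^{2k})$ and $\widetilde q(x)=\bigl(1-P_k(x)\bigr)\exp(a_kx^{2k})$, which satisfy $q'=b_k+\psi q$ and $\widetilde q'=-b_k+\psi\widetilde q$, so that \eqref{sol15} rewrites as $f_z'(x)=(1-P_k(z))b_k^{-1}q'(x)$ on $\{x<z\}$ and $f_z'(x)=P_k(z)b_k^{-1}\widetilde q'(x)$ on $\{x\ge z\}$. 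Using $\psi\ge 0$ on $[0,\infty)$ and (i), one checks $q'\ge 0$ and $\widetilde q'\le 0$ on all of $\R$ (hence $f_z'\ge 0$ for $x<z$ and $f_z'\le 0$ for $x\ge z$), that $q'$ is nondecreasing on $[0,\infty)$ (being $b_k$ plus the product of the nonnegative nondecreasing functions $\psi$ and $q$ there), and, by the symmetry $P_k(-y)=1-P_k(y)$, that $\widetilde q'(-y)=-q'(y)$; combining these with the sharp Mills inequality $\bigl(1-P_k(z)\bigr)\psi(z)\exp(a_kz^{2k})\le b_k$ for $z>0$ (again (i)), with $q'(x)\le b_k$ for $x<0$, and with $P_k(z)\le 1$ (used to fold the cross term into $b_k$) then yields $0\le f_z'(x)\le 1$ on $\{x<z\}$ and $-1\le f_z'(x)\le 0$ on $\{x\ge z\}$.

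The hard part will be precisely this last bound $|f_z'(x)|\le 1$ in the ``mismatched-sign'' regimes --- $x<0$ with $z$ large and positive, or $z\le x<0$ --- where the factor $\exp(a_kx^{2k})$ is large and must be absorbed by the small tail factor $1-P_k(z)$ (respectively $P_k(z)$); it is exactly there that the sharp form of (i), with the decay $1/(2ka_kx^{2k-1})$ rather than the crude constant $1/2$, is indispensable, and everything reduces to the single monotonicity fact that $q'$ is nondecreasing on $[0,\infty)$.
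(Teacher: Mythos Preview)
The paper does not give its own proof of this lemma; it is stated with attribution to Eichelsbacher and L\"{o}we \cite{eichelsbacher2010stein} and used as a black box. Your proposal therefore supplies what the paper omits, and the argument you outline is correct and is the standard one: (i) is the elementary Mills-ratio trick, (ii)--(iii) and (v) are calculus with the integrating factor $\exp(-a_kx^{2k})$, and (iv) follows from the representation $f_z'=(1-P_k(z))b_k^{-1}q'$ on $\{x<z\}$ (resp.\ $P_k(z)b_k^{-1}\widetilde q'$ on $\{x\ge z\}$) together with the sharp form of (i).

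One small correction to your commentary at the end: the regime ``$x<0$ with $z$ large and positive'' is actually the \emph{easy} side of the bound $|f_z'|\le 1$, not the hard one. For $x<0$ one has $\psi(x)q(x)<0$, hence $q'(x)=b_k+\psi(x)q(x)\le b_k$ and $0\le f_z'(x)\le 1-P_k(z)\le 1$ with no absorption needed. The genuinely delicate case is $0\le x<z$ (and its mirror $z\le x\le 0$ on the other branch), where $q'(x)$ can be large and you must use the monotonicity of $q'$ on $[0,\infty)$ to pass to $q'(z)$ and then invoke the sharp Mills bound $(1-P_k(z))\psi(z)\exp(a_kz^{2k})\le b_k$ to close the inequality $(1-P_k(z))[1+b_k^{-1}\psi(z)q(z)]\le (1-P_k(z))+P_k(z)=1$. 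Your third paragraph already contains exactly this argument; only the diagnostic in the final paragraph mislabels which case is hard.
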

	
For any fixed $a>0$, elementary calculations show
	\begin{equation}\label{bound.constant}
	\exp(-a z^{2k})\le \frac{B^pp^{p/k}}{(1+z)^{2p}} \text{ for all } z>0.
	\end{equation}
Hereafter, we will use \eqref{bound.constant} without explicitly mentioning it.
	The following two lemmas are generalizations of Lemmas 2.3 and 2.4 in Th\`{a}nh and Tu \cite{thanh2025non}, respectively.
	\begin{lemma}\label{lem.bs} 
		For $z \ge 5$ and $\xi$ is a random variable satisfying $0\le |\xi|\le |\Delta|$. Then
		\begin{equation}\label{estimate.lem1.01}
			\E(f_{z}(W))^2 \le \dfrac{B^p (p^{p/k}+\E|W|^{2p})}{(1+z)^{2p}},
		\end{equation}
		and
		\begin{equation}\label{estimate.lem1.02}
			\E(f_{z}'(W+\xi))^2 \le \dfrac{B^p (p^{p/k}+\E|W|^{2p})}{(1+z)^{2p}}.
		\end{equation}
	\end{lemma}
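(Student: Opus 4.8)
The plan is to prove both inequalities by the same dichotomy: for each of $f_z(W)$ and $f_z'(W+\xi)$ I would split the expectation according to whether the argument lies in the \emph{bulk}, where its absolute value is at most $z/2$, or in the \emph{tail}, where it exceeds $z/2$. On the bulk I would use the explicit formulas \eqref{sol03} and \eqref{sol15} together with the Mills-type estimates \eqref{mills.ratio1}--\eqref{mills.ratio2} to extract a factor that decays like $\exp(-cz^{2k})$ with $c=a_k(1-2^{-2k})>0$, and then turn this into the required polynomial decay via \eqref{bound.constant} (applied with the fixed exponent $2c>0$). On the tail I would use only the crude uniform bounds in \eqref{sol07} and Markov's inequality. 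The hypothesis $z\ge5$ is used only to replace $1/z$ by a constant multiple of $1/(1+z)$ and to absorb fixed numerical factors into $B^p$.

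For \eqref{estimate.lem1.01}, I would write $\E(f_z(W))^2=\E(f_z(W))^2\mathbf{1}(W\le z/2)+\E(f_z(W))^2\mathbf{1}(W>z/2)$. On $\{W>z/2\}$ use $0<f_z(W)\le 1/(2b_k)$ from \eqref{sol07} and $\P(W>z/2)\le\P(|W|>z/2)\le 2^{2p}\E|W|^{2p}/z^{2p}$, which is $\le B^p\E|W|^{2p}/(1+z)^{2p}$. On $\{W\le z/2\}$ one has $W<z$, so \eqref{sol03} gives $f_z(W)=(1-P_k(z))P_k(W)\exp(a_kW^{2k})b_k^{-1}$; I would bound $P_k(W)\exp(a_kW^{2k})\le\exp(a_k(z/2)^{2k})$ (using $P_k\le1$ when $W\ge0$ and \eqref{mills.ratio2} when $W<0$) and $1-P_k(z)\le\tfrac{b_k}{2ka_kz^{2k-1}}\exp(-a_kz^{2k})$ from \eqref{mills.ratio1}, which yields $f_z(W)\le B\exp(-cz^{2k})$ on this event. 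Squaring, applying \eqref{bound.constant}, and adding the two pieces gives \eqref{estimate.lem1.01}.

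For \eqref{estimate.lem1.02}, I would split $\E(f_z'(W+\xi))^2$ according to $\{|W+\xi|\le z/2\}$ versus $\{|W+\xi|>z/2\}$. On $\{|W+\xi|\le z/2\}$ one has $W+\xi<z$, so the first branch of \eqref{sol15} applies; using $0<P_k\le1$ and that $t\mapsto t^{2k-1}\exp(a_kt^{2k})$ is nondecreasing on $[0,\infty)$, I would bound $|f_z'(W+\xi)|\le(1-P_k(z))\bigl(1+\tfrac{2ka_k}{b_k}(z/2)^{2k-1}\exp(a_k(z/2)^{2k})\bigr)$, and then \eqref{mills.ratio1} gives $|f_z'(W+\xi)|\le B\exp(-cz^{2k})$, so the bulk contribution is at most $B^pp^{p/k}/(1+z)^{2p}$ by \eqref{bound.constant}.

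The step I expect to be the main obstacle is the tail term $\E(f_z'(W+\xi))^2\mathbf{1}(|W+\xi|>z/2)$: there only $|f_z'|\le1$ from \eqref{sol07} is available, so the contribution is at most $\P(|W+\xi|>z/2)$, and applying Markov's inequality directly would bring in a moment of $\Delta$, which is not permitted on the right-hand side of the lemma. The way around this is to observe that, since $|\xi|\le|\Delta|=|W-W'|$,
\[
\{|W+\xi|>z/2\}\subseteq\{|W|>z/4\}\cup\{|W-W'|>z/4\}\subseteq\{|W|>z/8\}\cup\{|W'|>z/8\},
\]
so that by exchangeability $\P(|W+\xi|>z/2)\le\P(|W|>z/8)+\P(|W'|>z/8)=2\,\P(|W|>z/8)\le 2\cdot 8^{2p}\E|W|^{2p}/z^{2p}\le B^p\E|W|^{2p}/(1+z)^{2p}$, with no moment of $\Delta$ appearing. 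Adding this to the bulk estimate completes the proof of \eqref{estimate.lem1.02}.
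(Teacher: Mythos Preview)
Your proposal is correct and follows essentially the same approach as the paper: a bulk/tail decomposition, with the Mills-type estimates \eqref{mills.ratio1}--\eqref{mills.ratio2} producing the $\exp(-cz^{2k})$ decay on the bulk and Markov's inequality handling the tail. The paper uses a three-piece split (treating $\{W<0\}$, respectively $\{W+\xi<0\}$, as a separate case where \eqref{sol05} and \eqref{sol13} apply directly) rather than folding it into the bulk, and for the tail in \eqref{estimate.lem1.02} it simply bounds $\E|W+\xi|^{2p}\le 2^{2p-1}(\E|W|^{2p}+\E|\xi|^{2p})\le B^p\E|W|^{2p}$ rather than going through your event-inclusion argument; these are minor organizational differences, though note that your bulk justification for \eqref{estimate.lem1.02} as written only covers $W+\xi\in[0,z/2]$ and you should invoke \eqref{mills.ratio2} (as you did for $f_z$) to cover $W+\xi\in[-z/2,0)$.
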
 
	
	\begin{proof}
		Firstly, we prove \eqref{estimate.lem1.01}.
		To bound $\E(f_{z}(W))^2$, we write 
		\begin{equation}\label{estimate.lem1.03}
			\E (f_{z}(W))^2 =I_1+I_2+I_3,
		\end{equation}
		where
		\begin{equation*}
			\begin{split}
				I_1&=\E (f_{z}(W))^2\mathbf{1}(W< 0),\
				I_2=\E (f_{z}(W))^2\mathbf{1}(W>z/2),\
				I_3=\E (f_{z}(W))^2\mathbf{1}(0\le W\le z/2).
			\end{split}
		\end{equation*}
From  \eqref{mills.ratio2} and \eqref{sol03}, we have 
	\begin{equation}\label{sol05}
			0 < f_z(x) \le B(1-P_k(z)),\ x<0.
		\end{equation}
		By noting that $z\ge 5$ and using \eqref{mills.ratio1} and \eqref{sol05}, we have
		\begin{equation}\label{estimate.lem1.05}
			\begin{split}
				I_1\le B(1-P_k(z))^2 
				\le  B\exp \left(-2a_k z^{2 k}\right)
				\le \dfrac{B^pp^{p/k}}{(1+z)^{2p}}.
			\end{split} 
		\end{equation}
		By using \eqref{sol07} and Markov's inequality, we have
		\begin{equation}\label{estimate.lem1.07}
			\begin{split}
				I_2&\le \left(\frac{1}{2b_k}\right)^2 \P(1+W>1+z/2)\\
				&\le B \frac{\E|1+W|^{2p}}{(1+z/2)^{2p}}\\
				&\le \dfrac{B^p(1+\E|W|^{2p})}{(1+z)^{2p}}.
			\end{split} 
		\end{equation}
		By noting that $z\ge 5$ and using \eqref{mills.ratio1} and \eqref{sol03}, we have
		\begin{equation}\label{estimate.lem1.08}
			\begin{split}
				I_3		&\le b_k^{-2}(1-P_k(z))^2 \E \left(e^{2a_kW^{2k}} \mathbf{1}(0 \le W \le z/2)\right) \\
				&\le B\exp({-2a_kz^{2k}}) \exp({2a_kz^{2k}/4^k})\\
				&=B\exp({-2a_k(1-1/4^k)z^{2k}})\\
				&\le \dfrac{B^p p^{p/k}}{(1+z)^{2p}}.
			\end{split} 
		\end{equation}
		Combining \eqref{estimate.lem1.03}--\eqref{estimate.lem1.08} yields \eqref{estimate.lem1.01}.
		
		Next, we prove \eqref{estimate.lem1.02}. Since $0\le |\xi| \le |\Delta|$,
		\begin{equation}\label{sol12}
			\E|\xi|^{2p} \le \E|W-W'|^{2p} \le 2^{2p-1} (\E|W|^{2p}+\E|W'|^{2p})=2^{2p}\E|W|^{2p}.
		\end{equation}
		To bound $\E(f_{z}'(W+\xi))^2$, we write 
		\begin{equation}\label{estimate.lem1.13}
			\E(f_{z}'(W+\xi))^2 =J_1+J_2+J_3,
		\end{equation}
		where
		\begin{equation*}
			\begin{split}
				J_1&=\E (f_{z}'(W+\xi))^2\mathbf{1}(W+\xi< 0),\\
				J_2&=\E (f_{z}'(W+\xi))^2\mathbf{1}(W+\xi>z/2),\\
				J_3&=\E (f_{z}'(W+\xi))^2\mathbf{1}(0\le W+\xi\le z/2).
			\end{split}
		\end{equation*}
		From  \eqref{mills.ratio2} and \eqref{sol15}, we have  
		\begin{equation}\label{sol13}
			|f_{z}'(x)| \le B(1-P_k(z)),\ x<0.
		\end{equation}
		By using \eqref{mills.ratio1} and \eqref{sol13}, we have
		\begin{equation}\label{estimate.lem1.15}
			\begin{split}
				J_1&\le B(1-P_k(z))^2 \le B\exp \left(-2a_k z^{2 k}\right)\\
				&\le \dfrac{B^pp^{p/k}}{(1+z)^{2p}}.
			\end{split} 
		\end{equation}
		By using \eqref{sol07} and \eqref{sol12}, we have
		\begin{equation}\label{estimate.lem1.17}
			\begin{split}
				J_2&\le \P(W+\xi>z/2)\\
				&\le \frac{\E|W+\xi|^{2p}}{(z/2)^{2p}}\\
				&\le \dfrac{B^p(\E|W|^{2p}+\E|\xi|^{2p})}{(1+z)^{2p}} \\
				&\le \dfrac{B^p(1+\E|W|^{2p})}{(1+z)^{2p}}.
			\end{split} 
		\end{equation}
		By using \eqref{mills.ratio1}  and \eqref{sol15}, we have
		\begin{equation}\label{estimate.lem1.18}
			\begin{split}
				J_3&\le (1-P_k(z))^2\E \left(  ( 1 + 2ka_k(W+\xi)^{2k-1}b_k^{-1}\exp({a_k(W+\xi)^{2k}})  )^2\mathbf{1}(0 < W+\xi \le z/2)\right)\\
				&\le B \exp\left(-2a_kz^{2k}\right)\exp\left(2a_kz^{2k}/4^k\right)\\
				&=B\exp\left(-2a_k(1-1/4^k)z^{2k}\right)\le  \dfrac{B^{p}p^{p/k}}{(1+z)^{2p}}.
			\end{split} 
		\end{equation}
		Combining \eqref{estimate.lem1.13}--\eqref{estimate.lem1.18} yields \eqref{estimate.lem1.02}.
		The proof of the lemma is completed.
	\end{proof}

	\begin{lemma} \label{lem.bsg}  
		Let $z\ge 5$ and $|u| \le 100$. Then
		\begin{equation}\label{estimate.lem1.11}
			\E g_{z}(W+u) \le \dfrac{B^p(p^{p/k}+\E|W|^{2p})}{(1+z)^p}.
		\end{equation}
	\end{lemma}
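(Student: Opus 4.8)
The plan is to split $\E g_z(W+u)$ over the two events $\{|W+u|\le z/2\}$ and $\{|W+u|>z/2\}$, and to handle them with two complementary estimates on $g_z$: a crude polynomial bound valid on all of $\R$, and the explicit expression from Lemma \ref{lem.properties.of.solution}(v), which is only effective in the region where the Gaussian-type factor $e^{a_k(\cdot)^{2k}}$ is controlled.

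First I would record the elementary global bound $|g_z(x)|\le B(1+|x|^{2k-1})$ for every $x$. Since $g_z=(\psi f_z)'=\psi' f_z+\psi f_z'$ with $\psi(x)=2ka_kx^{2k-1}$ and $\psi'(x)=2k(2k-1)a_kx^{2k-2}$, this is immediate from the bounds $|f_z(x)|\le 1/(2b_k)$ and $|f_z'(x)|\le 1$ in \eqref{sol07}. On the event $\{|W+u|>z/2\}$ I would combine this with the pointwise inequality $\mathbf{1}(|W+u|>z/2)\le (2|W+u|/z)^{2p-2k+1}$, which is legitimate since $p\ge 2k-1$ forces the exponent to be at least $p\ge 1>0$; absorbing the ``$1$'' into $|W+u|^{2k-1}$ on this event then gives
\[
\E(g_z(W+u)\mathbf{1}(|W+u|>z/2))\le\frac{B^p\,\E|W+u|^{2p}}{z^{2p-2k+1}}\le\frac{B^p\,\E|W+u|^{2p}}{(1+z)^{p}}\le\frac{B^p(1+\E|W|^{2p})}{(1+z)^{p}},
\]
where I used $2p-2k+1\ge p$ and $z\ge 5$ (so $z\ge\frac{5}{6}(1+z)$) in the middle step and Lemma \ref{lem.bound.moment.W} with $|u|\le 100$ in the last. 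This yields the $\E|W|^{2p}$ part of the asserted bound.

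On the complementary event $\{|W+u|\le z/2\}$ we have $W+u<z$, so Lemma \ref{lem.properties.of.solution}(v) applies. Bounding $0<P_k(W+u)\le 1$, $e^{a_k(W+u)^{2k}}\le e^{a_k(z/2)^{2k}}=e^{a_kz^{2k}/4^{k}}$, and each of the powers $(W+u)^{2k-2}$, $(W+u)^{4k-2}$, $(W+u)^{2k-1}$ by $z^{4k-2}$ (using $z\ge 5$), and then inserting $(1-P_k(z))\le\frac12 e^{-a_kz^{2k}}$ from \eqref{mills.ratio1}, one obtains the deterministic bound
\[
g_z(W+u)\,\mathbf{1}(|W+u|\le z/2)\le B\,z^{4k-2}\,e^{-a_k(1-4^{-k})z^{2k}}.
\]
Splitting off one factor $e^{-a_k(1-4^{-k})z^{2k}/2}$ to absorb $z^{4k-2}$ into a constant depending only on $k$ and $a_k$, and applying \eqref{bound.constant} with $a=a_k(1-4^{-k})/2$ to the remaining factor, the right-hand side is at most $B^pp^{p/k}/(1+z)^{p}$; since this is a constant on the event, $\P(|W+u|\le z/2)\le 1$ gives the same bound for $\E(g_z(W+u)\mathbf{1}(|W+u|\le z/2))$. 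Adding the two regional estimates and using $1\le p^{p/k}$ (valid as $p\ge 1$) completes the proof.

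The delicate point, and the reason for splitting exactly at $|W+u|=z/2$, is to keep the two sources of smallness separate. The super-exponential decay of $(1-P_k(z))$ may only be exploited where $e^{a_k(W+u)^{2k}}\le e^{a_kz^{2k}/4^{k}}$, that is, on $\{|W+u|\le z/2\}$, because that is precisely where this factor is beaten by $e^{-a_kz^{2k}}$. One must avoid using the exponential prefactor on the far event: doing so would force a product of $e^{-a_kz^{2k}}$ with a high moment of $W$, and converting $e^{-a_kz^{2k}}$ into an inverse power of $(1+z)$ via \eqref{bound.constant} would then leave the illegitimate factor $p^{p/k}\,\E|W|^{2p}$ rather than the additive $p^{p/k}+\E|W|^{2p}$. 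On the far event, the bare polynomial bound on $g_z$ together with Markov's inequality is enough precisely because $p\ge 2k-1$ leaves the margin $2p-2k+1\ge p$.
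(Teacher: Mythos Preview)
Your proof is correct and follows essentially the same approach as the paper: split according to whether $W+u$ is near the origin (relative to $z$) or far, exploit the super-exponential decay of $1-P_k(z)$ via Lemma~\ref{lem.properties.of.solution}(v) on the near part, and use a moment bound on the far part. Your execution is marginally leaner—on the far region you bound $|g_z(x)|\le B(1+|x|^{2k-1})$ pointwise and apply Markov directly, whereas the paper goes through Cauchy--Schwarz against $(\E g_z^2(W+u))^{1/2}(\P(W+u>z/2))^{1/2}$; and on the near region you treat $|W+u|\le z/2$ in one piece via the explicit formula, whereas the paper handles $W+u<0$ separately via \eqref{sol09} and uses the monotonicity of $g_z$ on $[0,z)$ for $0\le W+u\le z/2$.
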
 
	
	\begin{proof}
		Since $\psi(x) = 2ka_kx^{2k-1}$, $|u|\le 100$ and $p\ge 2k-1$, we have from \eqref{sol07} and Lemma \ref{lem.bound.moment.W} that
		\begin{equation}\label{estimate.lem1.21}
			\begin{split}
				\E g_{z}^2(W+u) 
				&\le 2\E\left((\psi'(W+u)f_{z}(W+u))^2 + (\psi(W+u)f_{z}'(W+u))^2\right)\\
				&\le B(\E(W+u)^{4k-4} + \E(W+u)^{4k-2}) \\  
				&\le B^{p}(1+ \E|W|^{2p}).
			\end{split} 
		\end{equation}
		Applying \eqref{estimate.lem1.21}, the Cauchy--Schwarz inequality, Markov's inequality and Lemma \ref{lem.bound.moment.W} again, we obtain
		\begin{equation}\label{estimate.lem1.23}
			\begin{split}
				\E g_{z}(W+u)\mathbf{1}(W+u > z/2)
				&\le (\E g_{z}^2(W+u))^{1/2}(\E\mathbf{1}(W+u > z/2))^{1/2}\\
				&\le \frac{(\E g_{z}^2(W+u))^{1/2}(\E|W+u|^{2p})^{1/2}}{(z/2)^p}\\
				&\le \dfrac{B^{p}(1+\E|W|^{2p})}{(1+z)^{p}}.
			\end{split} 
		\end{equation}
		From Lemma \ref{lem.properties.of.solution} (i) and (v), we have $g(x)$ is increasing on $[0,z)$,
		\begin{equation}\label{sol09}
			0 < g(x) \le B(1-P_k(z))\le \frac{B^pp^{p/k}}{(1+z)^{p}},\ x<0,
		\end{equation}
		and 
		\begin{equation}\label{sol11}
			g(z/2)\le B \exp\left(-2a_kz^{2k}\right)\exp\left(2a_kz^{2k}/4^k\right)\le \frac{B^pp^{p/k}}{(1+z)^{p}}.
		\end{equation}
		It thus follows from \eqref{estimate.lem1.23}--\eqref{sol11} that
		\begin{align*}
			\E g_{z}(W+u)&=\E g_{z}(W+u)\left(\mathbf{1}(W+u<0)+\mathbf{1}(0 \le W+u \le z/2)+\mathbf{1}(W+u > z/2)\right)\\
			&\le \frac{B^{p}p^{p/k}}{(1+z)^{p}} + g_{z}(z/2) + \E(g_{z}(W+u)\mathbf{1}(W+u> z/2))\\
			&\le \frac{B^{p}(p^{p/k}+\E|W|^{2p})}{(1+z)^{p}},
		\end{align*}
		thereby establishing \eqref{estimate.lem1.11}.
\end{proof}

With Lemmas \ref{lem.concentration.inequality}, \ref{lem.bs}, and \ref{lem.bsg} established, the proof of Theorem \ref{thm.main11} 
proceeds along the same lines as the proof of Theorem 1.2 in Th\`{a}nh and Tu \cite{thanh2025non}.  For completeness and to keep the paper self-contained, we provide the details below.
	
\begin{proof}[Proof of Theorem \ref{thm.main11}]
	To  bound $\P(W\le z) - \P(Y_k\le z)$, it suffices to consider $z \ge 0$ since we can simply apply the result to $-W$ when $z<0$. In view of  Theorem \ref{thm.eichelsbacher2010}
	and the fact that
	$\E(\Delta^2I(|\Delta|>a))\le \sqrt{\E(\Delta^4I(|\Delta|>a))},$
	we may assume that $z\ge 5$.
	Since
	\begin{equation*}
		\begin{split}
			|\P(W\le z) - P_k(z)|&=|\P(W> z) - (1-P_k(z))|\\
			&\le \dfrac{\E|1+W|^{2p}}{(1+z)^{2p}}+(1-P_k(z))\\
			&\le \dfrac{C(p)\left(1+\E|W|^{2p}\right)}{(1+|z|)^p},\\
		\end{split}
	\end{equation*}
	the conclusion of the theorem holds if $a\ge 1$. It remains consider the case $0<a<1$.
	
	Let $f_z$ be the solution to Stein's equation \eqref{s3}. Then applying \eqref{sp} again, we have
	\begin{equation} \label{s4}
		\begin{split}
			&\P(W \le z) - P_k(z)\\
			&= \E(f_z'(W) - \psi(W)f_z(W))   \\
			&= \E f_z'(W) - \dfrac{1}{2\lambda} \E(W-W')(f_z(W)-f_z(W')) - \dfrac{1}{\lambda} \E(f_z(W)R)   \\
			&= \E \left[f'_z(W)\left(1- \dfrac{1}{2\lambda}(W - W')^2\right)\right] - \dfrac{1}{\lambda} \E(f_z(W)R) \\
			& \quad  - \dfrac{1}{2\lambda} \E[(W-W')(f_z(W)-f_z(W')- (W-W')f'_z(W))]    \\    
			&:= T_1 + T_2 +T_3.					  	  
	\end{split} \end{equation} 
Using Cauchy--Schwarz inequality and Lemma \ref{lem.bs},  we obtain 
	\begin{align} \label{b01}
		\begin{aligned}
			|T_1| 
			&\le \dfrac{C(p)(1+\E|W|^{2p})}{(1+z)^p }\sqrt{\E\left( 1- \dfrac{1}{2\lambda}\E((W-W')^2|W)\right)^2 },\\
			|T_2|  
			&\le \dfrac{C(p)(1+\E|W|^{2p})}{(1+z)^p}\dfrac{\sqrt{\E(R^2)}}{\lambda}.
	\end{aligned} \end{align}
It remains to bound $T_3$. By recalling $\Delta=W-W'$, we have
	\begin{align} \label{s5}
		\begin{aligned}
			(-2\lambda)T_3 
			&= \E\left(\Delta(f_{z}(W)-f_{z}(W-\Delta)- \Delta f_{z}'(W))\right) \\
			&= \E\left(\Delta \mathbf{1}(|\Delta|>a)(f_{z}(W)-f_{z}(W-\Delta)- \Delta f_{z}'(W))\right)  \\
			&\quad \quad +\E\left(\Delta \mathbf{1}(|\Delta|\le a)(f_{z}(W)-f_{z}(W-\Delta)- \Delta f_{z}'(W))\right)\\
			&:= T_{3,1} + T_{3,2}.
		\end{aligned}
	\end{align}
		By applying Taylor's expansion, the Cauchy--Schwarz inequality and \eqref{estimate.lem1.02}, we obtain,
for a random variable $\xi$ satisfying $0\le |\xi| \le |\Delta|$, that
	\begin{equation}\label{b031} 
		\begin{aligned}
			|T_{3,1}| &= |\E\left(\Delta^2 \mathbf{1}(|\Delta|>a)(f_{z}'(W+\xi)- f_{z}'(W))\right)| \\
			&\le |\E\left(\Delta^2 \mathbf{1}(|\Delta|>a)(f_{z}'(W+\xi)\right)| + |\E\left(\Delta^2 \mathbf{1}(|\Delta|>a) f_{z}'(W))\right)|  \\	
			&\le \left(\E\Delta^4 \mathbf{1}(|\Delta|>a)\E(f_{z}'(W+\xi))^2\right)^{1/2} 
	     		+\left(\E\Delta^4 \mathbf{1}(|\Delta|>a)\E(f_{z}'(W))^2\right)^{1/2} \\
	     	&\le \dfrac{C(p)(1+\E|W|^{2p})\sqrt{\E\Delta^4 \mathbf{1}(|\Delta|>a)}}{(1+z)^p}.
		\end{aligned} 
	\end{equation}
From the Stein equation \eqref{s3}, we have
	\begin{equation}\label{b032}
		\begin{aligned}
			T_{3,2} &=  \E\left(\Delta \mathbf{1}(|\Delta| \le a)\int_{-\Delta}^{0}(f_{z}'(W+t) - f_{z}'(W))\dx t \right)   \\
			&=  \E\left( \Delta \mathbf{1}(|\Delta| \le a)\int_{-\Delta}^{0}(W+t)f_{z}(W+t) - Wf_{z}(W)\dx t \right)   \\
			& \quad +\E\left( \Delta \mathbf{1}(|\Delta| \le a)\int_{-\Delta}^{0}(\mathbf{1}(W+t \le z) - \mathbf{1}(W \le z))\dx t\right)  \\
			&:= T_{3,2,1} + T_{3,2,2}.
		\end{aligned} 
	\end{equation}
	By using Lemma \ref{lem.bsg} and noting that $g_z(w)\ge 0$ for all $w$, we have
	\begin{equation} \label{b0321}
		\begin{aligned}
			|T_{3,2,1}| &= \left|\E\left( \Delta \mathbf{1}(|\Delta| \le a)\int_{-\Delta}^{0} \int_{0}^{t}g_{z}(W+u)\dx u \dx t \right)\right|  \\
			&\le  \left|\E\left( \Delta \mathbf{1}(0 \le \Delta \le a)\int_{-\Delta}^{0}\int_{0}^{t}g_{z}(W+u)\dx u \dx t \right)\right|\\ 
			&\quad+ \left|\E\left( \Delta \mathbf{1}(-a \le \Delta \le 0)\int_{-\Delta}^{0}\int_{0}^{t}g_{z}(W+u)\dx u \dx t \right)\right|\\
			&\le  a\int_{0}^{a}\int_{-t}^{0}\E g_{z}(W+u)\dx u \dx t  + a\int_{0}^{a}\int_{0}^{t}\E g_{z}(W+u)\dx u \dx t \\
			&= 	a \int_{0}^{a}\int_{-t}^{t}\E g_{z}(W+u)\dx u \dx t\\
			& \le  \dfrac{C(p)(1+\E|W|^{2p})a^3}{(1+z)^p}.
		\end{aligned} 
	\end{equation}		
	Similarly,
	\begin{equation} \label{b0324}
		\begin{aligned}
			|T_{3,2,2}|&\le\left|\E\left( \Delta \mathbf{1}(0\le \Delta\le a)\int_{-\Delta}^{0}\mathbf{1}(z< W \le z-t)\dx t\right)\right|\\
			&\quad+\left|\E\left( \Delta \mathbf{1}(-a\le \Delta<0)\int_{-\Delta}^{0}\left(\mathbf{1}(W \le z-t) - \mathbf{1}(W \le z)\right)\dx t \right)\right|\\
			&\le \E\left( \Delta \mathbf{1}(0\le \Delta\le a)\int_{-\Delta}^{0}\mathbf{1}(z-a\le  W \le z+a)\dx t\right) \\
			&\quad+\E\left(-\Delta \mathbf{1}(-a\le \Delta<0)\int_{0}^{-\Delta}\mathbf{1}(z-a\le  W \le z+a)\dx t \right)\\
			&= \E\left( \Delta^2 \mathbf{1}(|\Delta|\le a)\mathbf{1}(z-a\le  W \le z+a)\right)\\
			&\le \frac{C(p)(1+\E|W|^{2p})a(\lambda+\sqrt{\E R^{2}})}{(1+z)^p},
		\end{aligned} 
	\end{equation}
	where we have applied Lemma \ref{lem.concentration.inequality} in the last inequality.
	Combining \eqref{b032}, \eqref{b0321} and \eqref{b0324} yields
	\begin{equation} \label{b0325}
		\begin{aligned}
			|T_{3,2}|&\le \frac{C(p)(1+\E|W|^{2p})(a^3+a(\lambda+\sqrt{\E R^{2}}))}{(1+z)^p}.
		\end{aligned} 
	\end{equation}
	Combining \eqref{s5}, \eqref{b031} and \eqref{b0325} yields
	\begin{align} \label{b03}
		\begin{aligned}
			|T_{3}| &\le \dfrac{C(p)(1+\E|W|^{2p})}{(1+z)^p}\left(\dfrac{a \sqrt{\E R^2}}{\lambda}+ a + \dfrac{ a^3}{\lambda}+\dfrac{\sqrt{\E\Delta^4 \mathbf{1}(|\Delta|>a)}}{\lambda}\right). 
		\end{aligned} 
	\end{align}
	The conclusion of the theorem follows from \eqref{s4}, \eqref{b01} and \eqref{b03}.
	
\end{proof}

	\section{Applications}\label{sec.appl}
	
	In this section, we apply Theorem \ref{thm.main11} to establish non-uniform 
	Berry--Esseen bounds for the magnetization in the critical Curie--Weiss model and
the critical imitative monomer-dimer model.
	Throughout this section, the symbol $B$ denotes a universal constant
	which can be different for each appearance.

	\subsection{The critical Curie--Weiss model}\label{subsec.cw}
	
	Consider the classical Curie--Weiss model
	for $n$ spins $\sigma=(\sigma_1,\ldots,\sigma_n)\in\{-1,1\}^n$. The joint distribution of $\sigma$ is
	\[\P(\sigma)= \dfrac{1}{Z_\beta}
	\exp\left(\dfrac{\beta}{n}\sum_{1\le i<j\le n}\sigma_i\sigma_j \right),
	\]
	where $Z_\beta$ is the normalizing constant and $\beta>0$ is the inverse temperature.
	We are interested in distribution approximations 
	of the total spin (or magnetization) $S_n=\sigma_1+\cdots+\sigma_n$.
	It has been shown that the asymptotic behavior of $S_n$ changes when $\beta$ 
	crosses the critical value $1$. 
	If $0<\beta<1$ or $\beta>1$, then $S_n$ obeys a central limit theorem. However, if $\beta=1$, then $S_n$
	has a non-normal limit. Precisely,
	Ellis and Newman \cite{ellis1978limit}  proved that if $\beta=1$, then $S_n/n^{3/4}$ converges to $Y$
	in distribution, where $Y$ is a random variable with probability density function 
	\begin{equation}\label{ap3}
		p(x)= K\exp(-x^4/12),
	\end{equation}
	and $K$ is the normalizing constant.
	
	Chatterjee and Shao \cite{chatterjee2011nonnormal}, and Eichelsbacher and L\"{o}we \cite{eichelsbacher2010stein} established
	the uniform Berry--Esseen bound for the Ellis--Newman non-central limit theorem
	with rate $n^{-1/2}$ by using Stein's method for exchangeable pairs.
	The Cram\'{e}r-type moderate deviation for $S_n$ was established by
	Can and Pham \cite{can2017cramer}, and Shao et al. \cite{shao2021cramer}
	with the same rate.

	In this subsection, we apply Theorem \ref{thm.main11} to establish a non-uniform 
	Berry--Esseen bound for the total spin $S_n$. Theorem \ref{thm.app1} extends Theorem 2.1
	of Chatterjee and Shao \cite{chatterjee2011nonnormal} and Theorem 3.8 of Eichelsbacher and L\"{o}we \cite{eichelsbacher2010stein}
	to the non-uniform Berry--Esseen bound.
	
	\begin{theorem}\label{thm.app1}
		Let $p\ge 3$. Consider the critical Curie--Weiss model and let $W:=W_n=S_n/n^{3/4}$, where
		$S_n=\sigma_1+\cdots+\sigma_n$ denotes the total spin.
		Then for all $z\in\R$, we have
		\begin{equation}\label{ap2}
			\left|\P\left(W\le z\right) -\P(Y\le z)\right| \le \dfrac{B^{p}p^{p/2}}{(1+|z|)^{p} n^{1/2}},
		\end{equation}
		where $Y$ is a random variable with probability density function as given in \eqref{ap3}.
	\end{theorem}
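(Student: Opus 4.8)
The plan is to realise $W=W_n=S_n/n^{3/4}$ as one half of a Gibbs-sampler exchangeable pair and then invoke Theorem~\ref{thm.main11} with $k=2$ and $a_k=1/12$, so that the density $p_k$ in \eqref{b1} is exactly $p(x)=K\exp(-x^4/12)$ from \eqref{ap3} and $\psi(x)=2ka_kx^{2k-1}=x^3/3$. Concretely, let $I$ be uniform on $\{1,\dots,n\}$ and independent of $\sigma$, resample $\sigma_I'$ from the conditional Curie--Weiss law of the $I$-th spin given $(\sigma_j)_{j\ne I}$, put $S_n'=S_n-\sigma_I+\sigma_I'$ and $W'=S_n'/n^{3/4}$. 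Reversibility of single-site Glauber dynamics for the Curie--Weiss measure makes $(\sigma,\sigma')$, hence $(W,W')$, exchangeable, and $\Delta=W-W'=(\sigma_I-\sigma_I')/n^{3/4}$ satisfies $|\Delta|\le 2n^{-3/4}$ surely. I shall take $a=2n^{-3/4}$ in Theorem~\ref{thm.main11}, which makes the term $\E\Delta^4\mathbf{1}(|\Delta|>a)$ vanish and makes both $a$ and $a^3/\lambda$ equal to $O(n^{-3/4})=o(n^{-1/2})$ once $\lambda$ has been identified.

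Next I would verify \eqref{nonlinear2}. From $\E(\sigma_i'\mid\sigma)=\tanh\!\big((S_n-\sigma_i)/n\big)$, splitting the sum over $i$ according to the sign of $\sigma_i$ gives the exact identity
\[
\E(W-W'\mid W)=\frac{1}{n^{7/4}}\Big(S_n-\tfrac{n+S_n}{2}\tanh\tfrac{S_n-1}{n}-\tfrac{n-S_n}{2}\tanh\tfrac{S_n+1}{n}\Big).
\]
Writing $t=S_n/n=Wn^{-1/4}$, so that $|t|\le 1$, and expanding $\tanh$ about $t$ (using $t-\tanh t=t^3/3+O(t^5)$) isolates the leading term $\lambda\psi(W)=W^3/(3n^{3/2})$, i.e.\ $\lambda=n^{-3/2}$, with remainder $R$ obeying $|R|\le Bn^{-2}(1+|W|^5)$; the polynomial growth in $W$ is harmless because every function involved has bounded derivatives on $[-1,1]$, so the Taylor remainders carry absolute constants. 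The same bookkeeping applied to $\E\big((\sigma_i-\sigma_i')^2\mid\sigma\big)=2\big(1-\sigma_i\tanh\tfrac{S_n-\sigma_i}{n}\big)$ yields $\tfrac{1}{2\lambda}\E(\Delta^2\mid W)=1-t^2+O(t^4)+O(1/n)$, hence $1-\tfrac{1}{2\lambda}\E(\Delta^2\mid W)=W^2n^{-1/2}+(\text{lower order})$.

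The last ingredient is a uniform moment estimate with explicit $p$-dependence, $\sup_n\E|W_n|^{2p}\le B^pp^{p/2}$ for every $p\ge 1$. This follows from the well-known sub-Gaussian-type concentration of the critical magnetisation, $\sup_n\E\exp(cW_n^4)<\infty$ for some $c>0$ (obtainable from the exponential-tilting/Laplace analysis behind the Ellis--Newman limit \cite{ellis1978limit}), via $x^{m}\le m!\,c^{-m}e^{cx}$ with $m=p/2$ and Stirling's bound. Granting it, $\E R^2\le Bn^{-4}\E(1+|W|^{10})\le Bn^{-4}$, so $\sqrt{\E R^2}/\lambda\le Bn^{-1/2}$, and likewise $\sqrt{\E(1-\tfrac{1}{2\lambda}\E(\Delta^2\mid W))^2}\le Bn^{-1/2}$ since $\sup_n\E W_n^4<\infty$. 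Substituting $a=2n^{-3/4}$ and these bounds into Theorem~\ref{thm.main11}, every factor inside the parenthesis is $O(n^{-1/2})$, while $p^{p/k}+\E|W|^{2p}=p^{p/2}+\E|W|^{2p}\le B^pp^{p/2}$, which gives exactly $|\P(W\le z)-\P(Y\le z)|\le\dfrac{B^{p}p^{p/2}}{(1+|z|)^{p} n^{1/2}}$, as claimed. (For small $n$ the bound is trivial after enlarging $B$, so one may freely assume $\lambda=n^{-3/2}<1$.)

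I expect the main obstacle to be twofold. First, obtaining the moment bound in the precise form $\E|W_n|^{2p}\le B^pp^{p/2}$: the weaker statement $\sup_n\E|W_n|^{2p}<\infty$ for each fixed $p$ would not produce the stated constant, so the exponential concentration estimate, uniform in $n$, must be invoked carefully. Second, pushing the Taylor expansions of the regression function and of $\E(\Delta^2\mid W)$ far enough and recording the remainders as genuine polynomials in $W$ rather than in the unbounded quantity $t$, so that the crude bound $|t|\le 1$ together with the moment estimate absorbs all higher-order contributions after taking expectations.
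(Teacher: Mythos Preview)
Your plan coincides with the paper's proof: construct the Gibbs-sampler exchangeable pair, verify \eqref{nonlinear2} with $\lambda=n^{-3/2}$ and $\psi(x)=x^3/3$, take $a=2n^{-3/4}$, bound $R$ and $1-\tfrac{1}{2\lambda}\E(\Delta^2\mid W)$ via the same Taylor expansions (the paper simply cites the corresponding inequalities from Chatterjee--Shao \cite{chatterjee2011nonnormal}), and feed everything into Theorem~\ref{thm.main11}.

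The only substantive divergence is the moment estimate $\E|W|^{2p}\le B^pp^{p/2}$, which you yourself flag as the main obstacle. You propose to import a uniform exponential bound $\sup_n\E\exp(cW_n^4)<\infty$; this is true but is itself a nontrivial Laplace/transfer computation, so your argument is not self-contained at that point. The paper instead extracts the moment bound directly from the exchangeable pair: multiply the pointwise bound on $|\E(W-W'\mid\sigma)-W^3/(3n^{3/2})|$ by $|W|^{2p-3}$, take expectations, and use exchangeability together with $|\Delta|\le 2n^{-3/4}$ to get $|\E(W-W')W^{2p-3}|\le 2(2p-3)n^{-3/2}\E|W|^{2p-4}$. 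After absorbing the lower-order terms (using $|W|\le n^{1/4}$) this yields the recursion $\E|W|^{2p}\le 20(p-1)\E|W|^{2p-4}$, hence $\E|W|^{2p}\le(20p)^{p/2}$ by Lyapunov. This route needs no external concentration input and makes the constant fully explicit, so it resolves precisely the difficulty you anticipated.
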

	
	\begin{proof}
		Let $\sigma' = \{\sigma_{1}',\ldots,\sigma_{n}'\}$, 
		where for each $i$ fixed, $\sigma_{i}'$ is an independent copy of $\sigma_{i}$ given $\{\sigma_{j}, j\ne i\}$, 
		that is, given $\{\sigma_{j}, j\ne i\}$,
		$\sigma_{i}'$ and $\sigma_{i}$ have the same distribution and $\sigma_{i}'$ is conditionally independent of $\sigma_{i}$.
		Let $I$ be a random index independent of all others and uniformly distributed over $\{1,\ldots,n\}$, and let
		\[W'= \frac{S_{n} - \sigma_{I}+ \sigma_{I}'}{n^{3/4}}.\]
		Let $\psi(x)=(x^4/12)'=x^{3}/3$ and $\lambda=1/n^{3/2}$.
		Chatterjee and Shao \cite{chatterjee2011nonnormal} proved that $(W,W')$ is an exchangeable pair satisfying
		\begin{equation}\label{a05}
			\E(W'|W) = W-\lambda \psi(W) +R,
		\end{equation} 
		where $R$ is a random variable.
		We will prove that
		\begin{equation}\label{shao01}
			\E|W|^{2p}\le 20^{p/2} p^{p/2},
		\end{equation}
		\begin{equation}\label{shao02}
			\frac{\sqrt{\E|R|^{2}}}{\lambda}\le \frac{B}{n^{1/2}},
		\end{equation}
		and
		\begin{equation}\label{shao03}
			\E\left( 1- \frac{1}{2\lambda}\E({\Delta}^2|W)\right)^2\le \frac{B}{n},
		\end{equation}
		where $\Delta=W-W'$. By \eqref{a05} and the fact that
		\begin{equation}\label{bounded-pair-CurieWeiss}
			|\Delta|=\left|\frac{\sigma_{I}-\sigma_{I}'}{n^{3/4}}\right| \le \frac{2}{n^{3/4}},
		\end{equation}
		we are now able to apply Theorem \ref{thm.main11} with $k=2$, $p_k(x)=K\exp(-x^4/12)$ and $a=2/n^{3/4}$.
		By collecting the bounds in \eqref{shao01}--\eqref{shao03}, we obtain \eqref{ap2}. 
		
		It remains to prove \eqref{shao01}--\eqref{shao03}. 
		From inequality (5.28) in Chatterjee and Shao \cite{chatterjee2011nonnormal}, we have
		\begin{equation}\label{shao04}
			|R|=\left|\E(W-W'|\sigma)-\frac{W^3}{3n^{3/2}}\right|\le \frac{2|W|^5}{15n^{2}}+\frac{|W|}{n^2}+\frac{1}{n^{11/4}}.
		\end{equation}
		First, assume that $p\ge 3$ is an integer.
		Since $W$ is measurable with respect to $\sigma$, multiplying $|W^{2p-3}|$ on both sides
		of the inequality in \eqref{shao04} and then taking the expectation yields
		\begin{equation}\label{shao05}
			\left|\E(W-W')W^{2p-3}-\frac{\E W^{2p}}{3n^{3/2}}\right|\le \frac{2\E|W|^{2p+2}}{15n^{2}}+\frac{\E|W|^{2p-2}}{n^2}+\frac{\E|W|^{2p-3}}{n^{11/4}},
		\end{equation}
		and therefore
		\begin{equation}\label{shao07}
			\E|W|^{2p}\le 	\left|3n^{3/2}\E(W-W')W^{2p-3}\right|+\frac{2\E|W|^{2p+2}}{5n^{1/2}}+\frac{3\E|W|^{2p-2}}{n^{1/2}}+\frac{3\E|W|^{2p-3}}{n^{5/4}}.
		\end{equation}
		By using the exchangeability of $(W,W')$, we have
\begin{equation}\label{shao09}
			\begin{split}
				\E(W'-W)W^{2p-3}&=\frac{1}{2}\E(W'-W)(W^{2p-3}-(W')^{2p-3})\\
				&=-\frac{1}{2}\E(W'-W)^2(W^{2p-4}+W^{2p-5}W'+\cdots+(W')^{2p-4}).
			\end{split}
\end{equation}
		For any $0\le k\le 2p-4$, we have from H\"{o}lder's inequality that
		\begin{equation}\label{shao10}
			\begin{split}
				\E |W^{2p-4-k}(W')^{k}|\le \left(\E|W|^{2p-4}\right)^{(2p-4-k)/(2p-4)}\left(\E|W'|^{2p-4}\right)^{k/(2p-4)}=\E |W|^{2p-4}.
			\end{split}
		\end{equation} 
Combining \eqref{bounded-pair-CurieWeiss}, \eqref{shao09} and \eqref{shao10} yields
		\begin{equation}\label{shao11}
			\begin{split}
				\left|\E(W'-W)W^{2p-3}\right|\le 2(2p-3)n^{-3/2}\E|W|^{2p-4}.
			\end{split}
		\end{equation}
		Since $|W|\le n^{1/4}$,
		\begin{equation}\label{shao17}
			\frac{2\E|W|^{2p+2}}{5n^{1/2}}+\frac{3\E|W|^{2p-2}}{n^{1/2}}+\frac{3\E|W|^{2p-3}}{n^{5/4}}\le \frac{2\E|W|^{2p}}{5}+3\E|W|^{2p-4}+\frac{3\E|W|^{2p-4}}{n}.
		\end{equation}
		Combining \eqref{shao07}, \eqref{shao11} and \eqref{shao17} yields
		\begin{equation*}
			\E|W|^{2p}\le 	6(2p-3)\E|W|^{2p-4}+\frac{2\E|W|^{2p}}{5}+6\E|W|^{2p-4},
		\end{equation*}
		which is equivalent to
		\begin{equation}\label{shao19}
			\E|W|^{2p}\le 	20(p-1)\E|W|^{2p-4}.
		\end{equation}
		Since 
		\[\E|W|^{2p-4}\le \left(\E|W|^{2p}\right)^{(p-2)/p},\]
		we obtain from \eqref{shao19} that
		\begin{equation}\label{shao21}
			\E|W|^{2p}\le 20^{p/2}(p-1)^{p/2}
		\end{equation}
		thereby establishing \eqref{shao01}. Now, consider the case where $p\ge 3$ is a real number.
		Let $\lceil p \rceil$ denote the smallest integer that is greater or equal to $p$.
		Applying Jensen's inequality and \eqref{shao21} with $p$ replaced by $\lceil p \rceil$, we have
		\begin{equation*}
			\E|W|^{2p}\le \left(\E|W|^{2\lceil p\rceil}\right)^{p/\lceil p\rceil} \le \left(20^{\lceil p\rceil/2}(\lceil p\rceil-1)^{\lceil p\rceil/2} \right)^{p/\lceil p\rceil}\le 20^{p/2} p^{p/2}
		\end{equation*}
		again establishing \eqref{shao01}.

By applying \eqref{shao01} with $p=5$, \eqref{shao04} and Jensen's inequality, we obtain \eqref{shao02}.
		Chatterjee and Shao \cite{chatterjee2011nonnormal} (see the proof of Lemma 5.1 in \cite{chatterjee2011nonnormal}) also proved that 
		\begin{equation}\label{shao23}
			\left|2n^{-3/2}-\E(\Delta^2|\sigma)\right|\le 2n^{-5/2}+2n^{-2}W^2.
		\end{equation}
		Since $\lambda=n^{-3/2}$ and $W$ is measurable with respect to $\sigma$, \eqref{shao23} implies
		\begin{equation}\label{shao24}
			\E\left(1-\frac{1}{2\lambda}\E(\Delta^2|W)\right)^2\le \E\left(n^{-1}+n^{-1/2}W^2\right)^2\le 2(n^{-2}+n^{-1}\E W^4).
		\end{equation}
		By applying \eqref{shao01} with $p=3$, \eqref{shao24}  and Jensen's inequality, we obtain \eqref{shao03}.
		
		The proof of the theorem is completed.
	\end{proof}

	\begin{remark}
\begin{description}
\item[(i)] Th\`{a}nh \cite{thanh2025moment} 
derived a moment inequality for exchangeable pairs, which offers an alternative way to bound $\E|W|^{2p}$.
\item[(ii)] Liu et al. \cite{liu2021non} studied non-uniform Berry--Esseen bound for a general Curie--Weiss model.
		The case $k=2$ of classical Curie--Weiss model of Theorem 4 (ii) of Liu et al. \cite{liu2021non} gives
		\begin{equation}\label{liu21}
			\left|\P\left(W\le z\right) -\P(Y\le z)\right| \le \dfrac{B}{(1+|z|^3) n^{1/4}}.
		\end{equation}
		It would be possible to deal with the general Curie--Weiss model
		using Theorem \ref{thm.main11}. 
		Since this is meant to be only an illustration, we keep the expressions as simple as possible by considering only the classical model.
		Compared to \eqref{liu21}, Theorem \ref{thm.app1} improves the term $(1+|z|^3)$ to
		$(1+|z|^p)$ for any $p\ge3$. In \cite{can2017cramer,chatterjee2011nonnormal,eichelsbacher2010stein}, 
		the rate of convergence for the classical case was already achieved as $n^{-1/2}$.
\end{description}
\end{remark}
	
	\subsection{The imitative monomer-dimer mean-field model}
	
	A mean-field version of the monomer-dimer model with imitative interaction, called the imitative monomer-dimer model, is introduced as follows. 
	For $n \ge 1$, let $G=(V, E)$ be a complete graph with vertex set $V=\{1, \ldots, n\}$ and edge set $E=\{\{u,v\}: u, v \in V, u<v\}$. 
	A dimer configuration on the graph $G$ is a set $D$ of pairwise nonincident 
	edges satisfying the condition that if $\{u,v\} \in D$, then for all $w \neq v$, $(u,w) \notin D$. 
	Let $\mathcal{D}$ denote the set of all dimer configurations. 
	Given a dimer configuration $D$, the set of monomers $\mathcal{M}(D)$ is the collection of dimer-free vertices. 
	The Hamiltonian of the model with an imitation coefficient $J \ge 0$ and an external field $h \in \mathbb{R}$ is given by
	\[-H(D)=n\left(Jm(D)^2+\left(\frac{\log n}{2}+h-J\right) m(D)\right)\]
	for all $D \in \mathcal{D}$, where 
	\[m(D)=|\mathcal{M}(D)|/n.\]
	The associated Gibbs measure is defined as
\[\P(D)=\frac{e^{-H(D)}}{\sum_{D \in \mathcal{D}} e^{-H(D)}}.\]
	Let
	\begin{align} \label{3.7}
		\tilde{p}(x)=-J x^2-\frac{1}{2}(1-g(\tau(x))+\log (1-g(\tau(x)))),
	\end{align}
	where
	\[g(x)=\frac{1}{2}\left(\sqrt{e^{4 x}+4 e^{2 x}}-e^{2 x}\right), \quad \tau(x)=(2 x-1) J+h.\]
	Alberici et al. \cite{alberici2014mean} showed that there exists a function 
	$\gamma:\left(J_c, \infty\right) \rightarrow \mathbb{R}$ with $\gamma\left(J_c\right)=h_c$,
	where $J_c=1/(4(3-2 \sqrt{2}))$ and $h_c=(\log(12-8\sqrt{2})-1)/4$, such that if $(J, h) \notin \Gamma$, where $\Gamma=\left\{(J, \gamma(J)): J>J_c\right\}$, then the function 
	$\tilde{p}(x)$ has a unique maximizer $m_0$ that satisfies $m_0=g\left(\tau\left(m_0\right)\right)$. Moreover, if $(J, h) \notin \Gamma \cup\left\{\left(J_c, h_c\right)\right\}$, then ${\tilde{p}}^{\prime \prime}\left(m_0\right)<0$, and if $(J, h)=\left(J_c, h_c\right)$, then $m_0=m_c=2-\sqrt{2}$, ${\tilde{p}}^{\prime}\left(m_c\right)={\tilde{p}}^{\prime \prime}\left(m_c\right)={\tilde{p}}^{(3)}\left(m_c\right)=0, {\tilde{p}}^{(4)}\left(m_c\right)<0$.
	Alberici et al. \cite{alberici2014mean} proved that if $(J, h) \notin \Gamma \cup\left\{\left(J_c, h_c\right)\right\}$,
	then $m(D)$ concentrates at $m_0$ and has a normal limit. 
	However, if $(J, h)=\left(J_c, h_c\right)$, then
	$m(D)$ still concentrates at $m_0$ and but has a non-normal limit. Specifically, they proved that in this latter case,
	$n^{1 / 4}\left(m(D)-m_0\right)$ converges to $Y$ in distribution, where $Y$ is a random variable with the density function proportional to
	$\exp({\tilde{p}}^{(4)}\left(m_c\right)x^4/24)$.
	
	Chen \cite{chen2016limit} used Stein's method to
	obtain the rate of convergence for the above limit theorems of
	Alberici et al. \cite{alberici2014mean}. To prove his result, Chen \cite{chen2016limit}
	reformulated the monomer-dimer model as a Curie--Weiss model with additional weights.
	Let $n\ge 1$ and $\Sigma=\{0,1\}^n$. For each $\sigma=\left(\sigma_1, \ldots, \sigma_n\right) \in \Sigma$, define a Hamiltonian
	\[-H(\sigma)=n\left(J m(\sigma)^2+\left(\frac{\log n}{2}+h-J\right) m(\sigma)\right),\]
	where 
	\[m(\sigma)=\frac{\sigma_1+\cdots+\sigma_n}{n}\] 
	is the magnetization of the configuration $\sigma$. Let $\mathcal{A}(\sigma)$ denote the set of all sites $i \in V$ such that $\sigma_i=1$, and let $D(\sigma)$ denote the total number of dimer configurations $D \in \mathcal{D}$ with $\mathcal{M}(D)=\mathcal{A}(\sigma)$. By introducing the Gibbs measure
	\[\P(\sigma)=\frac{D(\sigma) \exp (-H(\sigma))}{\sum_{\tau \in \Sigma} D(\tau) \exp (-H(\tau))},\]
	Chen \cite{chen2016limit} showed that
	\[\P\left(m(\sigma)=t/n\right)=\P\left(m(D)=t/n\right)\ \text{ for all }\ t=0,1,\ldots, n.\]
	Therefore, proving the limit theorems for the monomer density in the monomer-dimer model reduces to analyzing the magnetization in the weighted Curie--Weiss model.
	
	Let $\lambda_c=-{\tilde{p}}^{(4)}\left(m_c\right)>0$ and let $Y$ be a random variable with probability density function
	\begin{align}\label{38}
		p(x)=K e^{-\lambda_c x^4 / 24},
	\end{align}
	where $K$ is the normalizing constant. Chen \cite{chen2016limit} proved that if
	$(J, h)=\left(J_c, h_c\right)$, then one has the following uniform Berry--Esseen bound:
	\begin{equation}\label{chen.unif}
		\sup_{z\in\R}\left|\P\left(n^{1/4}(m(\sigma)-m_c)\le z\right)-\P(Y\le z)\right|\le \frac{B}{n^{1/4}}.
	\end{equation}
	More recently, Shao et al. \cite{shao2021cramer} established a Cram\'{e}r type moderate deviation for $m(\sigma)$. 
	In this subsection, we will apply Theorem \ref{thm.main11} to extend \eqref{chen.unif} to a non-uniform Berry--Esseen bound.
	
	\begin{theorem}\label{thm.app2}
		Let $p\ge 3$. Under the above setting, if $(J, h)=\left(J_c, h_c\right)$, then for all $z\in\R$, we have
		\begin{equation}\label{ap4}
			\left|\P\left(n^{1/4}(m(\sigma)-m_c)\le z\right)-\P(Y\le z)\right| \le \dfrac{B^{p}p^{p/2}}{(1+|z|)^{p} n^{1/4}},
		\end{equation}
		where $Y$ is a random variable with probability density function as given in \eqref{38}.
	\end{theorem}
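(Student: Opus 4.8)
The plan is to follow the proof of Theorem~\ref{thm.app1} line by line, replacing the explicit Curie--Weiss estimates of Chatterjee and Shao~\cite{chatterjee2011nonnormal} by the corresponding estimates that Chen~\cite{chen2016limit} established in the weighted Curie--Weiss representation of the monomer--dimer model (and, where sharper moment control is needed, by the refinements of Shao et al.~\cite{shao2021cramer}), and then to invoke Theorem~\ref{thm.main11}. Set $W:=W_n=n^{1/4}(m(\sigma)-m_c)=(\sigma_1+\cdots+\sigma_n-nm_c)/n^{3/4}$ and let $(W,W')$ be the exchangeable pair constructed in~\cite{chen2016limit}; the structural facts we need are that $\Delta:=W-W'$ has bounded range, $|\Delta|\le c_1 n^{-3/4}$ for an absolute constant $c_1$, and that $(W,W')$ satisfies~\eqref{nonlinear2}. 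Since $k=2$ and $a_k=\lambda_c/24$, the density $p_k$ of~\eqref{b1} is exactly the limit density~\eqref{38} and $\psi(x)=2ka_kx^{2k-1}=\lambda_c x^3/6=(\lambda_c x^4/24)'$. We will apply Theorem~\ref{thm.main11} with this $k$ and with $a$ a fixed constant multiple of $n^{-3/4}$ exceeding $c_1 n^{-3/4}$, so that the contribution $\sqrt{\E\Delta^4\mathbf{1}(|\Delta|>a)}/\lambda$ vanishes.

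Three inputs must be imported from~\cite{chen2016limit} (together with~\cite{shao2021cramer}), the analogues of inequality~(5.28) and of the bounds behind Lemma~5.1 of~\cite{chatterjee2011nonnormal}: (a) in~\eqref{nonlinear2} the coefficient $\lambda$ is a fixed positive multiple of $n^{-3/2}$, consistent with $2\lambda=\E\Delta^2$ and the identity $\E(\Delta^2\mid\sigma)=\frac{2}{n^{5/2}}\sum_{i=1}^n\E\bigl[(\sigma_i-\sigma_i')^2\mid\{\sigma_j:j\ne i\}\bigr]$ together with the concentration of $m(\sigma)$ at $m_c$; (b) a pointwise bound $|R|\le B(|W|^{q}+1)n^{-\theta}$ on the Taylor remainder (which uses $\tilde p'(m_c)=\tilde p''(m_c)=\tilde p^{(3)}(m_c)=0$), with $q$ a fixed exponent and $\theta$ large enough that $\sqrt{\E R^2}/\lambda\le B n^{-1/4}$; and (c) a conditional-variance estimate $\bigl|2\lambda-\E(\Delta^2\mid\sigma)\bigr|\le B(1+W^2)n^{-\theta'}$, which after dividing by $2\lambda$, squaring and taking expectations yields $\E\bigl(1-\frac{1}{2\lambda}\E(\Delta^2\mid W)\bigr)^2\le B n^{-1/2}$. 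These are precisely the estimates underlying Chen's uniform bound~\eqref{chen.unif}; the only new bookkeeping is to carry the weight $(1+|z|)^{-p}$ through the argument instead of taking a supremum over $z$.

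The one genuinely new ingredient is the $p$-dependent moment bound $\E|W|^{2p}\le B^p p^{p/2}$, uniform in $p\ge 3$, which I would prove by the recursion of~\eqref{shao04}--\eqref{shao21}. For integer $p$, multiply the pointwise bound on $|R|$ by $|W|^{2p-3}$, take expectations, symmetrize via exchangeability so that $\E(W'-W)W^{2p-3}=-\frac{1}{2}\E(W'-W)^2\sum_{j=0}^{2p-4}W^{2p-4-j}(W')^{j}$, bound this by $B p\,n^{-3/2}\E|W|^{2p-4}$ using $|\Delta|\le c_1 n^{-3/4}$ and H\"older's inequality, and use $|W|\le n^{1/4}$ (a consequence of $m(\sigma)\in[0,1]$) to make every remaining error term at most $\frac{1}{2}\E|W|^{2p}$ plus $B p\,\E|W|^{2p-4}$; absorbing the first term and applying $\E|W|^{2p-4}\le(\E|W|^{2p})^{(p-2)/p}$ gives $\E|W|^{2p}\le(Bp)^{p/2}$, while the non-integer case follows from Jensen's inequality applied with $\lceil p\rceil$, exactly as after~\eqref{shao21}. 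Granting all of this, Theorem~\ref{thm.main11} with $k=2$ and the above $a$ gives
\[
\bigl|\P(W\le z)-\P(Y\le z)\bigr|\le\frac{B^p\bigl(p^{p/2}+\E|W|^{2p}\bigr)}{(1+|z|)^{p}}\Bigl(B n^{-1/4}+\frac{\sqrt{\E R^2}}{\lambda}+a+\frac{a^3}{\lambda}\Bigr)\le\frac{B^p p^{p/2}}{(1+|z|)^{p}\,n^{1/4}},
\]
which is~\eqref{ap4}, since each of $a$, $a^3/\lambda$ and $\sqrt{\E R^2}/\lambda$ is $O(n^{-1/4})$ (recall $a\asymp n^{-3/4}$ and $\lambda\asymp n^{-3/2}$). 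The main obstacle is thus not the Stein-method step --- Theorem~\ref{thm.main11} is used as a black box --- but verifying, by tracking Chen's computations, that $R$ admits a bound by a \emph{fixed-degree} polynomial in $|W|$ (with coefficients small enough, after multiplication by $n^{3/2}$, for the recursion to close) times sufficiently negative powers of $n$; this single point simultaneously delivers the $p^{p/2}$ growth of $\E|W|^{2p}$ and the $n^{-1/4}$ decay of $\sqrt{\E R^2}/\lambda$.
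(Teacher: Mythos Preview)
Your overall strategy matches the paper's: build Chen's exchangeable pair, verify the three estimates on $\E|W|^{2p}$, $\sqrt{\E R^2}/\lambda$ and the conditional variance, and plug into Theorem~\ref{thm.main11}. The gap is in the moment recursion. In the Curie--Weiss proof the explicit coefficient $2/15$ in \eqref{shao04} is what makes the argument close: after multiplying by $|W|^{2p-3}$, dividing by $\lambda$, and using $|W|\le n^{1/4}$, the dangerous term becomes $\tfrac{2}{5}\E|W|^{2p}$, and $2/5<1$ lets you move it to the left. In the monomer--dimer model Chen's bound is only $|R|\le B(|W|^4+1)n^{-7/4}$ with an \emph{unspecified} constant $B$; the corresponding term is $Bn^{-1/4}\E|W|^{2p+1}$, and the crude bound $|W|\le n^{1/4}$ turns this into $B\,\E|W|^{2p}$ with a constant you have no control over. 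Your assertion that ``every remaining error term is at most $\tfrac12\E|W|^{2p}$ plus $Bp\,\E|W|^{2p-4}$'' is therefore unjustified, and the recursion does not close as written.

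The paper repairs this with one additional input from Chen~\cite{chen2016limit}: the large-deviation estimate $\P(|W|>\delta n^{1/4})\le Be^{-n\eta}$ for every fixed $\delta>0$. Splitting $n^{-1/4}\E|W|^{2p+1}$ over $\{|W|\le\delta n^{1/4}\}$ and its complement gives $\delta\,\E|W|^{2p}+Bn^{p/2}e^{-n\eta}$; now $\delta$ can be chosen small (depending only on the constant in Chen's bound for $R$, not on $p$) so that the first piece is absorbable, while $n^{p/2}e^{-n\eta}\le B^p p^{p/2}$. The outcome is $\E|W|^{2p}\le B\bigl(p\,\E|W|^{2p-4}+B^p p^{p/2}\bigr)$, which carries an extra additive term compared with \eqref{shao19}; the paper then closes this by a short case split on whether $(\E|W|^{2p})^{(p-2)/p}$ exceeds $B^p p^{p/2}$, rather than by a single application of Jensen as in the Curie--Weiss case. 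Once you add this concentration step, the rest of your outline goes through essentially verbatim.
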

	
	\begin{proof}
		For any $\sigma \in \Sigma, \{u,v\} \in D$ and $s, t \in\{0,1\}$, let $\sigma_{u,v}^{s,t}$ denote the configuration $\tau \in \Sigma$, such that $\tau_i=\sigma_i$ for $i \neq u, v$ and $\tau_u=s, \tau_v=t$. Let ($\sigma_u^{\prime}, \sigma_v^{\prime}$) be independent of ($\sigma_u, \sigma_v$) and follow the conditional distribution
		\[\P\left(\sigma_u^{\prime}=s, \sigma_v^{\prime}=t \mid \sigma\right)=\frac{\P\left(\sigma_{u v}^{s t}\right)}{\sum_{s, t \in\{0,1\}} \P\left(\sigma_{u v}^{s t}\right)}.\]
		Let
		\[W=n^{1/4}(m(\sigma)-m_c)\ \text{ and }\ W^{\prime}=n^{1/4}\left(m(\sigma)-\frac{\sigma_u+\sigma_v-\sigma_u^{\prime}-\sigma_v^{\prime}}{n}-m_c\right).\]
		From Lemma 1 of Chen \cite{chen2016limit} (see also the proof of Theorem 3.2 of Shao et al. \cite{shao2021cramer}), we have
		that $(W,W')$ is an exchangeable pair satisfying
		\begin{equation}\label{a15}
			\E(W'|W) = W-\lambda \psi(W) +R,
		\end{equation} 
		where
		\[\lambda=\dfrac{2(1-m_c)\left(m_{c}^2+(1-m_{c})e^{2\tau(m_{c})}\right)}{\left((1-m_{c})+e^{2\tau(m_{c})}\right)n^{3/2}},\ \psi(x)=\left(\frac{\lambda_c x^4}{24}\right)'=\frac{\lambda_c x^{3}}{6},\]
		and $R$ is a random variable with
		\begin{equation}\label{chen.boundR}
			|R| \le \frac{B\left(|W|^4+1\right)}{n^{7/4}}.
		\end{equation}
		We will prove that
		\begin{equation}\label{chen01}
			\E|W|^{2p}\le B^p p^{p/2},
		\end{equation}
		\begin{equation}\label{chen02}
			\frac{\sqrt{\E|R|^{2}}}{\lambda}\le \frac{B}{n^{1/4}},
		\end{equation}
		and
		\begin{equation}\label{chen03}
			\E\left( 1- \frac{1}{2\lambda}\E({\Delta}^2|W)\right)^2\le \frac{B}{n^{1/2}},
		\end{equation}
		where $\Delta=W-W'$. By \eqref{a15} and the fact that $|\Delta|\le 2/n^{3/4}$,
		we are now able to apply Theorem \ref{thm.main11} with $k=2$, $p_k(x)=K\exp(-\lambda_cx^4/24)$ and $a=2/n^{3/4}$.
		By collecting the bounds in \eqref{chen01}--\eqref{chen03}, we obtain \eqref{ap4}. 
		
		It remains to prove \eqref{chen01}--\eqref{chen03}. 
		From \eqref{a15} and \eqref{chen.boundR}, we have
		\begin{equation}\label{chen04}
			\begin{split}
				\frac{\lambda \lambda_c |W|^3}{6}&=|\E(W-W'|W)+R|\\
				&\le |\E(W-W'|W)|+\frac{B\left(|W|^4+1\right)}{n^{7/4}}.
			\end{split}
		\end{equation}
First, assume that $p\ge 3$ is an integer.
Since $0\le m(\sigma)\le 1$ and $m_c=2-\sqrt{2}$, $|W|\le n^{1/4}$. By using this bound and \eqref{chen04}, we have
		\begin{equation}\label{chen07}
			\begin{split}
				\E|W|^{2p}&\le 	\frac{6}{\lambda\lambda_c}\left(\left|\E(W-W')W^{2p-3}\right|+\frac{B\left(\E|W|^{2p+1}+\E|W|^{2p-3}\right)}{n^{7/4}}\right)\\
				&\le B\left(n^{3/2}\left|\E(W-W')W^{2p-3}\right|+n^{-1/4}\E|W|^{2p+1}+\E|W|^{2p-4}\right).
			\end{split}
		\end{equation}
		As in the proof of \eqref{shao11}, we also have
		\begin{equation}\label{chen11}
			\begin{split}
				\left|\E(W'-W)W^{2p-3}\right|\le 2(2p-3)n^{-3/2}\E|W|^{2p-4}.
			\end{split}
		\end{equation}
		For any $\delta>0$, Chen \cite{chen2016limit} showed that there exist some $\eta>0$ and a constant $B$ such that
		\begin{equation}\label{chen12}
			\begin{split}
				\P(|W|>\delta n^{1/4})\le Be^{-n\eta}.
			\end{split}
		\end{equation}
Since $|W|\le n^{1/4}$, it follows from \eqref{chen12} that
		\begin{equation}\label{chen17}
			\begin{split}
n^{-1/4}\E|W|^{2p+1}&=n^{-1/4}\E|W|^{2p+1}\left(\mathbf{1}(|W|\le \delta n^{1/4})+\mathbf{1}(|W|> \delta n^{1/4})\right)\\
				&\le \delta \E|W|^{2p}+Bn^{p/2}e^{-n\eta}.
			\end{split}
		\end{equation}
		Combining \eqref{chen07}, \eqref{chen11} and \eqref{chen17} yields
		\begin{equation}\label{chen19}
		(1-\delta B)\E|W|^{2p}\le 	B\left((2p-3)\E|W|^{2p-4}+n^{p/2}e^{-n\eta}+\E|W|^{2p-4}\right).
		\end{equation}
By choosing $\delta$ sufficiently small and noting that $n^{p/2}e^{-n\eta}\le B^p p^{p/2}$,
we obtain from \eqref{chen19} that
\begin{equation}\label{chen20}
	\E|W|^{2p}\le B\left(p\E|W|^{2p-4}+B^p p^{p/2}\right).
\end{equation}
Set $u=\E|W|^{2p}$. Then by Jensen's inequality, we have $\E|W|^{2p-4}\le u^{(p-2)/p}$.
Therefore, \eqref{chen20} yields
\begin{equation}\label{chen21}
u\le B\left(pu^{(p-2)/p}+B^p p^{p/2}\right).
\end{equation}
If $u^{(p-2)/p}\le B^p p^{p/2}$, then
\[u\le B^{p^2/(p-2)}p^{p^2/(2(p-2))}\le B^{p+6}p^{(p+6)/2}\]
thereby implying
\begin{equation}\label{chen28}
\E|W|^{2p}\le B^pp^{p/2}.
\end{equation}
If $u^{(p-2)/p}> B^p p^{p/2}$, then it follows from \eqref{chen21} that 
$u\le B(p+1)u^{(p-2)/p}$, which establishes \eqref{chen28} again. Thus, \eqref{chen01} holds when $p\ge 3$ is an integer.
Now, if $p\ge 3$ is a real number, then \eqref{chen01} also follows by applying \eqref{chen28} with $p$ replaced by $\lceil p \rceil$ and using Jensen's inequality.
		
By applying \eqref{chen01} with $p=4$, \eqref{chen.boundR} and Jensen's inequality, we obtain \eqref{chen02}.
		Chen \cite{chen2016limit} (see the proof of Lemma 1 in \cite{chen2016limit}) also proved that 
		\begin{equation}\label{chen29}
			\left|\frac{1}{2 \lambda} \E\left(\Delta^2 \mid W\right)-1\right| \le \frac{B\left(|W|+1\right)}{n^{1/4}}.
		\end{equation}
		By applying \eqref{chen01} with $p=3$, \eqref{chen29}  and Jensen's inequality, we obtain \eqref{chen03}.
		
		The proof of the theorem is completed.
	\end{proof}

	\begin{remark}
		In \cite{thanh2025non}, the normal approximation result (Theorem \ref{thm.main11} for the case $k=1$ and $a_k=1/2$) was applied to derive non-uniform Berry--Esseen bounds for $N$-vector models (see, e.g., \cite{kirkpatrick2016asymptotics,thanh2019error}) and Jack measures (see, e.g., \cite{chen2021error,fulman2004stein,fulman2011zero}). 
It can also be used to obtain non-uniform Berry--Esseen bounds for the Curie--Weiss model and the imitative monomer-dimer model away from the critical point. We leave it to the interested reader.
	\end{remark}
	

 \textbf{Acknowledgments.} The authors are grateful to two anonymous Reviewers for carefully reading the manuscript and for offering
 helpful suggestions to improve the presentation. This work was supported by the National Foundation for Science and Technology Development (NAFOSTED), grant no. 101.03-2021.32.


\end{document}